\numberwithin{equation}{section}
\newtheorem{theorem}{Theorem}[section]
\newtheorem{corollary}[theorem]{Corollary}
\newtheorem{lemma}[theorem]{Lemma}
\theoremstyle{definition}
\newtheorem{definition}[theorem]{Definition}
\newtheorem{remark}[theorem]{Remark}
\newtheorem{question}[theorem]{Question}
\newcommand{\N}{\mathbb{N}}
\newcommand{\R}{\mathbb{R}}
\renewcommand{\S}{\mathbb{S}}
\newcommand{\Z}{{\rm Z}}
\newcommand{\sfd}{{\sf d}}
\newcommand{\rr}{\mathbb R}
\newcommand{\restr}[1]{\lower3pt\hbox{$|_{#1}$}}
\newcommand{\eps}{\varepsilon}  
\newcommand{\nchi}{{\raise.3ex\hbox{$\chi$}}}
\newcommand{\weakto}{\rightharpoonup}
\newcommand{\fr}{\hfill$\blacksquare$}  
\newcommand{\LIP}{\mathrm{LIP}}
\newcommand{\Lip}{\mathrm{Lip}}
\newcommand{\lip}{\mathrm{lip}}
\newcommand{\esssup}{{\rm ess}\sup}
\newcommand{\essinf}{{\rm ess}\inf}
\newcommand{\diam}{\mathrm{diam}}
\newcommand{\RCD}{{\sf RCD}}
\newcommand{\mm}{\mathfrak m}
\renewcommand{\limsup}{\varlimsup}
\renewcommand{\liminf}{\varliminf}
\renewcommand{\d}{{\rm d}}
\newcommand{\X}{{\rm X}}
\newcommand{\Xdm}{(\X,\sfd,\mm)}
\newcommand{\rmCh}{{\rm Ch}}
\newcommand{\supp}{{\rm supp}}
\newcommand{\PP}{\mathscr{P}}
\renewcommand{\phi}{\varphi}
\newcommand{\avr}{{\sf AVR}}
\newcommand{\mres}{\mathbin{\vrule height 1.6ex depth 0pt width 0.13ex\vrule height 0.13ex depth 0pt width 1.3ex}}
\title[]{An overview of the stability of Sobolev inequalities on Riemannian manifolds with Ricci lower bounds}
\author[]{Francesco Nobili} 
\address{Universit\'a di Pisa, Dipartimento di Matematica, Largo Bruno Pontecorvo 5,
56127 Pisa, Italy}
\email{\url{francesco.nobili@dm.unipi.it}}
\begin{document}

\begin{abstract}
We review recent results regarding the problem of the stability of Sobolev inequalities on Riemannian manifolds with Ricci curvature lower bounds. We shall describe techniques and methods from smooth and non-smooth geometry, the fruitful combination of which revealed particularly effective. Furthermore, we present a self-contained overview of the proof of the stability of the Sobolev inequality on manifolds with non-negative Ricci curvature and Euclidean volume growth, adopting a direct strategy tailored to this setting. Finally, we discuss related stability results and present some open questions.
\end{abstract}
\maketitle
\allowdisplaybreaks
\setcounter{tocdepth}{2}
\tableofcontents

\section{Introduction}
In the recent series of works \cite{NobiliViolo21,NobiliViolo24,NobiliParise24,NobiliViolo24_PZ}, several results around functional inequalities in the context of Riemannian manifolds have been investigated with the primary focus on the problem of the stability of Sobolev inequalities. In this note, we report on the main results achieved and give a {self-contained overview of the techniques and methods. Emphasis will be given to Riemannian manifolds with Ricci curvature lower bounds. }

We start by introducing this fascinating problem while gradually moving from the classical Euclidean setting to that of Riemannian manifolds. In the meantime, we recall previous key contributions around the topic and then present our main results.
\subsection{The stability program on the Euclidean space}
The Sobolev inequality in sharp form on the Euclidean space $\R^d$, for $d \in \N$ {and $p \in (1,d)$}, reads
\begin{equation}
\|u\|_{L^{p^*}(\R^d)} \le S_{d,p}\| \nabla u\|_{L^p(\R^d)}, \qquad \forall u \in \dot W^{1,p}(\R^d),
\label{eq:SobEuclidea}
\end{equation}
where {$p^*\coloneqq pd /(d-p)$ is the critical exponent}, $\dot W^{1,p}(\R^d) \coloneqq \{ u \in L^{p^*}(\R^d) \colon |\nabla u|\in L^p(\R^d)\}$ and 
\begin{equation}
  S_{d,p} \coloneqq \frac 1d\left(\frac{d(p-1)}{d-p}\right)^{\frac{p-1}p}\left(\frac{\Gamma(d+1)}{d\omega_d\Gamma(d/p)\Gamma (d+1-d/p)}\right)^{\frac 1d}, \label{eq:Sobolev constant}  
\end{equation}
is the sharp Sobolev constant explicitly computed by Aubin \cite{Aubin76-2} and Talenti \cite{Talenti76}, where $\omega_d \coloneqq \pi^{d/2} / \Gamma(d/2+1)$ and $\Gamma(\cdot)$ is the Gamma function. Extremal functions, i.e.\ functions for which equality occurs in \eqref{eq:SobEuclidea}, are also completely characterized 
\begin{equation}
{u_{a,b,z_0}} \coloneqq \frac{a}{(1+b|\cdot -z_0|^\frac{p}{p-1})^{\frac{d-p}{p}}}, \qquad a \in \R,\, b>0,\, z_0 \in \R^d,
\label{eq:bubble intro}
\end{equation}
and are usually called \emph{Euclidean bubbles} (or Aubin-Talenti bubbles) because of their radial shape.

A natural question, firstly asked in \cite{BrezisLieb83}, is the following:
\begin{center}
     \em If $u$ almost saturates \eqref{eq:SobEuclidea}, is $u$ close to the family of Euclidean bubbles?
\end{center}
One of the main difficulties behind this question is that the extremal family \eqref{eq:bubble intro} is invariant for a non-compact group of symmetries. In fact, a qualitative answer to this question can be given by means of the celebrated concentration compactness of Lions \cite{Lions84,Lions85}. If $(u_n)\subseteq \dot W^{1,p}(\R^d)$ non zero is so that $S_{d,p} - \|u_n\|_{L^{p^*}(\R^d)} /\| \nabla u\|_{L^p(\R^d)} \to 0$, then up to translations, dilations and scalings, it converges strongly in $L^{p^*}$ to some Euclidean bubble. In particular, by rephrasing this principle we get for $\eps>0$ the existence of some $\delta\coloneqq \delta(\eps,d,p)>0$ \emph{non-quantified} so that for every $0\neq u \in \dot W^{1,p}(\R^d)$ it holds
\[
\frac{\|u\|_{L^{p^*}(\R^d)}} {\| \nabla u\|_{L^p(\R^d)}} > S_{d,p}- \delta \qquad \implies \qquad \inf_{a,b,z_0}\frac{\| \nabla(u- u_{a,b,z_0})\|_{L^p(\R^d)} }{\|\nabla u\|_{L^{p}(\R^d)}} \le \eps,
\]
This provides a \emph{qualitative} answer to the stability problem.

\medskip

A \emph{quantitative} stability result instead aims at quantifying how close the function $u$ is to the family of Euclidean bubbles in terms of the deficit in the Sobolev inequality. The first result in this direction was obtained by Bianchi-Egnell \cite{BianchiEgnell91} who showed
\begin{equation}
    \inf_{a,b,z_0} \frac{\| \nabla( u -u_{a,b,z_0} )\|_{L^2(\R^d)}}{\|\nabla u\|_{L^2(\R^d)}} \le  C_d \Big(\frac{\| \nabla u\|_{L^2(\R^d)} }{ \|u\|_{L^{2^*}(\R^d)} }-S_{d,2}^{-1} \Big)^{\frac 12}, \qquad \forall u \in \dot W^{1,2}(\R^d), \label{eq:strong stability Eucl}
\end{equation}
for an implicit dimensional constant $C_d>0$. Subsequently, this research program was pushed further to deal with the much harder exponent $p \in (1,d)$ in the series of works  \cite{CianchiFuscoMaggiPratelli09,FigalliNeumayer19,Neumayer19,FigalliZhang22} requiring a new set of ideas. More recently, there has been a breakthrough in quantifying explicitly the stability constant $C_d$ appearing in \eqref{eq:strong stability Eucl}, see \cite{DolbeaultEstebanFigalliFrankLoss23,DolbeaultEstebanFigalliFrankLoss24} (also the related \cite{Konig23} and  \cite{Frank23_Survey}). 

Finally, we mention that {for $p=2$ the stability for signed \emph{critical points} of \eqref{eq:SobEuclidea} has also been studied. The family of signed critical points, i.e.\ nonnegative solutions of
\[
S_{d,2}^2\Delta u +u|u|^{2^*-2}=0,
\]
are again classified and turn out to be formed by Euclidean bubbles \cite{GidasNiNiremberg79}. Starting from \cite{Struwe84}, qualitative stability principles were established roughly saying that almost nonnegative solutions to the above partial differential equation are close to a finite sum of Euclidean bubbles. The latter phenomenon is usually referred to as bubbling. Besides, quantitative improvements of this principle were then obtained in \cite{CiraoloFigalliMaggi18,FigalliGlaudo20,DengSunWei21}.}

\subsection{The stability program on Riemannian manifolds: some examples}
Exporting this research line to the setting of Riemannian manifolds presents some interesting challenges that we shall face in the following order:
\begin{itemize}
    \item[{\rm i)}] understand the validity and the sharpness of Sobolev inequalities in connection with geometric and curvature assumptions;
    \item[{\rm ii)}] study the existence of extremal functions;
    \item[{\rm iii)}] depending on existence/non-existence, investigate a stability principle.
\end{itemize}
As for i), starting from Aubin \cite{Aubin98} with the celebrated AB-program, there were big efforts in understanding Sobolev inequalities on closed Riemannian manifolds (i.e.\ compact and boundaryless). { On a closed Riemannian manifold $(M,g)$, the typical critical Sobolev inequality (for $p=2$) reads
\[
\|u\|_{L^{2^*}(M)}^2 \le A\|\nabla u\|_{L^2(M)}^2 + B \| u\|_{L^2(M)}^2,\qquad \forall u \in W^{1,2}(M),
\]
for  two constants $A,B>0$. Notice that the $B$-term is clearly necessary due to the presence of constant Sobolev functions. The first main goals of the AB-program are to discuss notions of optimal Sobolev constants and to estimate those in terms of geometric and curvature assumptions. Due to the presence of two parameters, there are two parallel ways of defining Sobolev optimal constants. The A-part of this program looks for the best possible $A$ for an inequality of this kind to hold, while the $B$-part of this program asks the opposite question. A successive important goal of the AB-program consists then in computing their values, opening the way to the study of the existence of extremal functions. We refer to the authoritative book \cite{Hebey99}, as well as to \cite{DH02}. A small introduction will be also given in Section \ref{sec:AB-program}.}

Instead, on non-compact manifolds, the validity of Sobolev embeddings is usually related to non-collapsing assumptions of volumes and Ricci curvature lower bounds, see \cite[Chapter 3]{Hebey99} for more insights and references. Starting from \cite{Ledoux00,Xia01}, it became clear that reasonable assumptions are the requirements of non-negative Ricci curvature and Euclidean volume growth, as we will see later.

As for ii)-iii), results in both directions are very rare. Looking at model spaces with non-zero constant sectional curvature, we recall that on the round sphere $\S^d$ it can be shown via stereographic projection (see, e.g., \cite[Appendix A]{FrankLieb12}) that \eqref{eq:SobEuclidea} as well as the class \eqref{eq:bubble intro} are in one-to-one correspondence with the following sharp Sobolev inequality studied in \cite{Aubin76-3}
\begin{equation}
\|u\|_{L^{2^*}(\nu)}^2\le \frac{2^*-2}{d}\| \nabla u\|_{L^{2}(\nu)}^2 + \| u\|_{L^2(\nu)}^2, \qquad \forall u \in W^{1,2}(\S^d),
\label{eq:Sob sphere}
\end{equation}
admitting the class of extremizers (see \cite[Chapter 5]{Hebey99})
\begin{equation}
 v_{a,b,z_0} := \frac{a}{(1-b\cos(\sfd(\cdot ,z_0))^{\frac{2-d}{2}}},\qquad \text{with } a\in\R,\, b\in[0,1),\, z_0 \in \S^d.
\label{eq:spherical bubbles}
\end{equation}
We shall refer to any of the above functions as a \emph{spherical bubble}. Here $\sfd$ is the geodesic distance on $\S^d$ with respect to the standard round metric and $\nu \coloneqq {\rm Vol}/{\rm Vol}(\S^d)$ is the renormalized volume form. This renormalization is not always standard, but we shall adopt it to get clearer inequalities. Notice that in this case, it holds
\[
 S_{d,2} = \left(\frac{2^*-2}{d}\right)^{\frac 12} {\rm Vol}(\S^d)^{-\frac{1}{d}}.
\]
In particular, the Bianchi-Egnell stability \eqref{eq:strong stability Eucl} carries over to this setting (again, by stereographic projection, see \cite[Eq. (3)]{EngelsteinNeumayerSpolaor22}) hence, the program of stability for \eqref{eq:Sob sphere} on the round sphere is completely understood.

An analogous program was then run in the Hyperbolic space for the sharp Poincar\'e-Sobolev inequality studied in \cite{ManciniSandeep08}. Stability results were recently derived in \cite{BhaktaGangulyKarmakarMazumdar22} (see also \cite{BhaktaGangulyKarmakarMazumdar23PartI} for critical points).

{
A much more general quantitative result has been achieved in \cite{EngelsteinNeumayerSpolaor22} for a class of \emph{conformally invariant Sobolev inequalities} related to the Yamabe problem \cite{Yamabe60} (see \cite{LP87,BrendleMarques11} and references therein). See also the recent quantitative analysis performed in \cite{ChenKim24}. We shall give more insights about these results in Section \ref{sec:AB-program}.}
\subsection{Main results}
In this part, we start specializing our discussion to the setting of Riemannian manifolds with Ricci curvature lower bounds.

\medskip

Let us first consider a $d$-dimensional Riemannian manifolds $(M,g)$, $d \ge 2$, satisfying
\begin{equation}
    {\sf Ric}_g \ge 0, \qquad {\sf AVR }(M)\coloneqq \lim_{R\to\infty}\frac{{\rm Vol}_g(B_{R}(x))}{\omega_d R^d }>0, \label{eq:AVR}
\end{equation}
for $x \in M$. The second condition is called \emph{Euclidean volume growth} property while ${\sf AVR }(M)$ is called the asymptotic volume ratio of $M$. Thanks to the Bishop-Gromov monotonicity, it holds that ${\sf AVR}(M)\in[0,1]$ and the limit exists and it is independent of $x$. We recall that ${\sf AVR}(M)=1$ occur if and only if $(M,g)$ is isometric to $\R^d$, see \cite{Colding97,DePhilippisGigli18}. 

{ The class of manifolds satisfying \eqref{eq:AVR} is rich and contains many examples besides $\R^d$ with the Euclidean metric. Notable examples with ${\sf AVR}(M) \in (0,1)$ are $d$-dimensional Ricci flat asymptotical locally Euclidean manifolds of dimension $d\ge 4$. In the case $d=4$, they arise as gravitational instantons in Euclidean quantum gravity theory \cite{Hawking77} and a concrete example is given by the Eguchi-Hanson metric \cite{EguchiHanson79}. Further examples are given in \cite{Menguy00} with infinite topological type. Besides, spaces satisfying \eqref{eq:AVR} constitute an important class in geometric analysis and, as we will see later, further examples are also weighted convex cones (see \cite{CabreRosOtonSerra16} and references therein) and cones arising as limits of manifolds (see Section \ref{sec:3 ingredients}).

For our goals, it is well known that coupling a Ricci lower bound with volume noncollapsing of geodesic balls is sufficient to guarantees the validity of Sobolev embeddings on noncompact manifolds (see \cite[Chapter 3]{Hebey99} for a detailed discussion). However, the stricter requirements contained in \eqref{eq:AVR} leads to \emph{sharp} Sobolev inequalities paving the way to a further stability analysis. Indeed,} in \cite{BaloghKristaly21} (see also \cite{Kristaly23} for an Optimal Transport proof revisiting \cite{C-ENV04}) the following sharp Euclidean Sobolev inequality was derived under the assumptions \eqref{eq:AVR} and for all $p\in(1,d)$:
\begin{equation}
\| u\|_{L^{p^*}(M)} \le {\sf AVR }(M)^{-\frac{1}{d}}S_{d,p}\| \nabla u\|_{L^{p}(M)},\qquad \forall u \in \dot W^{1,p}(M).
\label{eq:AVR sob intro}
\end{equation}
{We recall that on manifold with ${\sf Ric}_g\ge 0$, the Euclidean volume growth property turns out to be also a necessary assumption to study Sobolev inequalities, as understood since the works \cite{Ledoux99,Xia01}}. 

\medskip

The natural follow-up question is whether extremal functions for \eqref{eq:AVR sob intro} exist or not. This has been fully addressed in \cite{NobiliViolo24_PZ}.
\begin{theorem}\label{thm:rigidity sob main avr}
    Let $(M,g)$ be a $d$-dimensional non-compact Riemannian manifolds, $d\ge 2$, with ${\sf Ric}_g\ge 0$ and ${\sf AVR}(M)>0$. Fix $p \in (1,d)$. Then, equality holds in \eqref{eq:AVR sob intro} for some non-zero $u \in \dot W^{1,p}(M)$ if and only if $M$ is isometric to $\R^d$. 
\end{theorem}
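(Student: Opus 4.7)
The \emph{if} direction is immediate: when $M\cong\R^d$ one has $\avr(M)=1$, so \eqref{eq:AVR sob intro} collapses to \eqref{eq:SobEuclidea}, which is saturated by the Aubin-Talenti bubbles \eqref{eq:bubble intro}. The content of the statement is the converse, which I would approach by tracking the equality cases in the proof of \eqref{eq:AVR sob intro}.

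Let $u\in\dot W^{1,p}(M)\setminus\{0\}$ realize equality; replacing $u$ with $|u|$ and invoking $p$-Laplace regularity, we may assume $u\ge 0$ and continuous. I would then revisit the optimal-transport argument of Balogh-Kristaly \cite{BaloghKristaly21} (or the reformulation in \cite{Kristaly23}). That proof couples the normalized measure $u^{p^*}\,d\vol_g$ to a suitable Aubin-Talenti profile via a Brenier-type map $T=\exp(\nabla\phi)$ and chains together: (i) the Monge-Amp\`ere identity in Alexandrov sense for the optimal potential $\phi$; (ii) a Jacobian comparison along the transport built from ${\sf Ric}_g\ge 0$ and the Bishop-Gromov asymptotic, which is precisely where the factor $\avr(M)^{-1/d}$ enters; (iii) H\"older's inequality with exponents tuned to $p^*$. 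The theorem amounts to extracting structural constraints from equality in each of these steps.

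Equality in H\"older pins down $u$, up to translation and dilation, as an Aubin-Talenti bubble with respect to the exponential chart at some $x_0\in M$. More importantly, equality in the Jacobian/Bishop-Gromov comparison along $\vol_g$-a.e.\ transport ray forces the Bishop-Gromov monotonicity at $x_0$ to be saturated at every radius reached by the transport. Since the explicit bubble shape of $u$ ensures that these rays issue from $x_0$ in directions of full spherical measure, one obtains
\[
\vol_g(B_R(x_0))=\avr(M)\,\omega_d\, R^d \qquad \text{for every } R>0.
\]

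I would then conclude via the volume-cone-to-metric-cone rigidity (Cheeger-Colding in the smooth case, or \cite{DePhilippisGigli18} in the nonsmooth framework): such an identity forces $M$ to be a metric cone over a cross-section $\Sigma\subset\S^{d-1}$ with apex $x_0$. Since $M$ is smooth and boundaryless, the apex must be a regular point, which is possible only if $\Sigma=\S^{d-1}$; hence $\avr(M)=1$ and $M\cong\R^d$. The main technical obstacle is the middle step: justifying that the equality cases genuinely propagate to a full spherical family of transport rays, so that the volume identity holds on entire balls and not only on a lower-dimensional subset. It is precisely here that the explicit bubble shape of $u$ derived from the H\"older equality becomes indispensable, bridging the variational information on $u$ with the geometric information on $M$.
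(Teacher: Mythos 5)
The \emph{if} direction is immediate and you and the paper agree. For the converse, however, you take a genuinely different route from the paper. The paper obtains Theorem~\ref{thm:rigidity sob main avr} as the smooth case of Theorem~\ref{thm:sob avr rcd}, proved by a rearrangement argument: an extremal $u$ forces equality both in the P\'olya--Szeg\H{o} inequality of Theorem~\ref{thm:polya noncompact} and in the one-dimensional Bliss inequality; the Bliss extremal characterization identifies $|u|^*$ as a one-dimensional bubble with $(|u|^*)'\neq 0$ a.e., and the rigidity part of Theorem~\ref{thm:polya noncompact} --- which rests on the rigidity of the sharp isoperimetric inequality under ${\sf Ric}\ge 0$ and positive $\avr$, cf.\ \cite{AntonelliPasqualettoPozzettaSemola22,CavallettiManini22} --- then yields that the space is a $d$-Euclidean cone with $u$ radial. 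Smoothness at the tip forces the cone to be $\R^d$. You instead propose tracking equality through the optimal-transport proof of \cite{BaloghKristaly21,Kristaly23,C-ENV04} and concluding via volume-cone-to-metric-cone rigidity \`a la Cheeger--Colding / \cite{DePhilippisGigli18}. That is a legitimately different strategy, trading isoperimetric rigidity for volume-cone rigidity. (For $p=2$ there is also a third, PDE-based route in \cite{CatinoMonticelliRoncoroni23}, different from both.)

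That said, the gap you flag yourself is the crux, and it is a genuine gap rather than a technical detail. The optimal-transport proof delivers a Jacobian comparison \emph{along transport rays}, not a Bishop--Gromov statement on metric balls around a fixed point. Equality in the Jacobian comparison $\vol_g$-a.e.\ does not, by itself, yield $\vol_g(B_R(x_0))=\avr(M)\,\omega_d R^d$ for every $R>0$: the rays need not foliate $M$ radially from a single $x_0$, and integrating the pointwise equality into a ball-volume identity would require precisely the kind of global radial structure one is trying to prove. Your preliminary claim that ``equality in H\"older pins down $u$ as an Aubin--Talenti bubble in the exponential chart'' is also not immediate: H\"older equality gives a pointwise relation between $u^{p^*}$ and the Monge--Amp\`ere Jacobian of the transport potential, and converting this into an explicit bubble profile in geodesic normal coordinates is essentially a nonlinear rigidity problem of the same difficulty as the theorem itself. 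No published equality analysis of this optimal-transport proof in the Riemannian setting accomplishes these steps. Finally, a small simplification if the volume identity were available: you would not need the full cone-to-metric-cone theorem, since letting $R\to 0$ and using smoothness of $M$ at $x_0$ already gives $\avr(M)=1$, whence $M\cong\R^d$ by \cite{Colding97}.
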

A straightforward corollary is then the following.
\begin{corollary}\label{cor:non existence}
    Let $(M,g)$ be a $d$-dimensional non-compact Riemannian manifolds, $d\ge 2$, with ${\sf Ric}_g\ge 0$. Fix $p \in (1,d)$. If ${\sf AVR}(M) \in (0,1)$, then there are no nonzero extremal functions in \eqref{eq:AVR sob intro}.
\end{corollary}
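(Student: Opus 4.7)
The plan is to deduce the corollary directly from Theorem \ref{thm:rigidity sob main avr} combined with the volume-cone rigidity recalled in the paragraph following \eqref{eq:AVR}. I would proceed by contradiction.

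Suppose, towards a contradiction, that there exists a non-zero extremal function $u \in \dot W^{1,p}(M)$ achieving equality in \eqref{eq:AVR sob intro}. Then the ``if and only if'' part of Theorem \ref{thm:rigidity sob main avr} applies and forces $(M,g)$ to be isometric to the Euclidean space $\R^d$. However, as noted in the excerpt (via Bishop--Gromov monotonicity together with the rigidity results of Colding \cite{Colding97} and De Philippis--Gigli \cite{DePhilippisGigli18}), an isometry with $\R^d$ is equivalent to ${\sf AVR}(M)=1$. This contradicts the standing assumption ${\sf AVR}(M)\in(0,1)$, so no such non-zero extremizer can exist.

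There is essentially no obstacle here: the corollary is the contrapositive of one direction of Theorem \ref{thm:rigidity sob main avr}, combined with the volume-cone rigidity recalled above. The only subtlety worth flagging is that the trivial function $u\equiv 0$ saturates \eqref{eq:AVR sob intro} in the degenerate sense $0\le 0$; this is why both Theorem \ref{thm:rigidity sob main avr} and the corollary are phrased for non-zero extremizers. All the genuine analytic work is already contained in the proof of Theorem \ref{thm:rigidity sob main avr}.
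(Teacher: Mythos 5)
Your argument is correct and is exactly the intended one: the paper presents the corollary as an immediate consequence of Theorem \ref{thm:rigidity sob main avr} together with the fact (recalled right after \eqref{eq:AVR}, via \cite{Colding97,DePhilippisGigli18}) that ${\sf AVR}(M)=1$ characterizes isometry with $\R^d$. Nothing further is needed.
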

The above were obtained for $p=2$ in \cite{CatinoMonticelli22} by studying the rigidity of signed critical points for Sobolev inequalities and in  \cite{NobiliViolo24} by studying the rigidity of the P\'olya-Szeg\H{o} inequality. In \cite{NobiliViolo24_PZ}, we generalized this result for all $p \in (1,d)$ by a fine analysis of the rigidity of the P\'olya-Szeg\H{o} inequality in this setting. We will comment on this improvement later in Section \ref{sec:Polya}.

\medskip 
The next follow-up question is what happens when a function almost saturates \eqref{eq:AVR sob intro}. Notice that the question still makes sense, even though extremal functions might not exist, as \eqref{eq:AVR sob intro} is sharp on every $(M,g)$ as in \eqref{eq:AVR}, possibly non-isometric to $\R^d$. Our main result was obtained in \cite{NobiliViolo24}.
\begin{theorem}\label{thm:qualitative SobAVR intro} 
For every $\eps>0,V\in(0,1)$ and $d> 2,$ there exists $\delta\coloneqq \delta(\eps,d,V)>0$ such that the following holds.
Let $(M,g)$ be a $d$-dimensional non-compact Riemannian manifold with ${\sf Ric}_g\ge 0$ and ${\sf AVR}(M)>V$ and suppose there exists $0\neq u\in \dot W^{1,2}(M)$ satisfying
\[
  \frac{\| u\|_{L^{2^*}(M)}}{\| \nabla u\|_{L^2(M)}}> {\sf AVR}(M)^{-\frac 1d}S_{d,2}- \delta.
\]
Then, there exist $a \in \R,\, b>0,$ and $z_0 \in M$ so that
\[
    \frac{\| \nabla( u -u_{a,b,z_0} )\|_{L^2(M)}}{\|\nabla u\|_{L^2(M)}} \le  \eps,
\] 
where $u_{a,b,z_0} \coloneqq a(1+b\sfd_g(\cdot,z_0)^2)^{\frac{2-d}{2}}$.
\end{theorem}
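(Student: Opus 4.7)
\emph{Proof plan.} The strategy is to argue by contradiction and exploit the compactness and stability theory of ${\sf RCD}(0,d)$ spaces in tandem with Theorem \ref{thm:rigidity sob main avr}. Suppose the conclusion fails: there exist $\varepsilon>0$, $V\in(0,1)$ and sequences $(M_n,g_n)$ of $d$-dimensional non-compact Riemannian manifolds with ${\sf Ric}_{g_n}\ge 0$ and ${\sf AVR}(M_n)>V$, together with non-zero $u_n\in\dot W^{1,2}(M_n)$, whose Sobolev ratios converge to the sharp value ${\sf AVR}(M_n)^{-1/d}S_{d,2}$, yet every $u_n$ has normalized $\dot W^{1,2}$-distance larger than $\varepsilon$ from the full family of bubbles $u_{a,b,z_0}$ on $M_n$. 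Up to extraction, ${\sf AVR}(M_n)\to V_\infty\in[V,1]$, and, after normalization, $\|\nabla u_n\|_{L^2(M_n)}=1$ and $\|u_n\|_{L^{2^*}(M_n)}\to{\sf AVR}(M_n)^{1/d}S_{d,2}$.

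The first step is a concentration analysis in the spirit of Lions applied to the measures $|\nabla u_n|^2\,d\mathrm{Vol}_{g_n}$. The uniform Sobolev embedding from \eqref{eq:AVR sob intro}, combined with the uniform volume doubling granted by ${\sf Ric}\ge 0$, rules out the vanishing and dichotomy alternatives, since either would force an asymptotic Sobolev ratio strictly below ${\sf AVR}(M_n)^{-1/d}S_{d,2}$. One therefore extracts concentration points $x_n\in M_n$ and scales $r_n>0$, and rescales: consider the pointed spaces $(M_n,r_n^{-1}\mathsf{d}_{g_n},r_n^{-d}\mathrm{Vol}_{g_n},x_n)$ and the functions $\tilde u_n(y)\coloneqq r_n^{(d-2)/2}u_n(y)$. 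The rescaled sequence retains unit Dirichlet energy and saturates the corresponding sharp Sobolev ratio, and now its mass is concentrated at scale $O(1)$ around $x_n$.

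The second step is the limit passage. By Gromov precompactness and the stability of the ${\sf RCD}(0,d)$ condition under pointed measured Gromov-Hausdorff convergence, with non-collapsed limits thanks to the uniform lower bound on the asymptotic volume ratio, a subsequence converges to a non-collapsed ${\sf RCD}(0,d)$ space $(X_\infty,\mathsf{d}_\infty,\mathfrak{m}_\infty,x_\infty)$ with ${\sf AVR}(X_\infty)>0$. Via Mosco convergence of the Cheeger energies along pmGH convergence, the $\tilde u_n$ converge to a non-zero $u_\infty\in\dot W^{1,2}(X_\infty)$ saturating the non-smooth analogue of \eqref{eq:AVR sob intro} on $X_\infty$. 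Invoking the ${\sf RCD}$ version of Theorem \ref{thm:rigidity sob main avr} obtained in \cite{NobiliViolo24_PZ} through the rigidity of the P\'olya-Szeg\H o inequality, one concludes that $X_\infty$ is isometric to $\R^d$ and that $u_\infty$ is a classical Aubin-Talenti bubble. Pulling this information back along the pmGH convergence produces triples $(a_n,b_n,z_n)\in\R\times(0,\infty)\times M_n$ such that $\|\nabla(u_n-u_{a_n,b_n,z_n})\|_{L^2(M_n)}\to 0$, contradicting the standing assumption.

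The main obstacle is the limit step, which rests on three delicate ingredients: the Mosco convergence of the Cheeger-Dirichlet energies under pointed measured Gromov-Hausdorff convergence of ${\sf RCD}(0,d)$ spaces, needed to transfer the near-saturation of the Sobolev ratio into an exact saturation in the limit; the rigidity of the sharp Sobolev inequality on possibly singular ${\sf RCD}(0,d)$ spaces with Euclidean volume growth, for which smooth techniques are unavailable and one must rely on the non-smooth P\'olya-Szeg\H o rigidity of \cite{NobiliViolo24_PZ}; and the control on the concentration scales $r_n$, since the cases $r_n\to 0$ and $r_n\to+\infty$ must be analyzed in parallel, with the latter excluded through the non-smooth version of Corollary \ref{cor:non existence} to prevent an asymptotic limit having ${\sf AVR}<1$ while admitting an extremizer. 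These are precisely the novel contributions of \cite{NobiliViolo24}.
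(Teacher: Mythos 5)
Your overall architecture — contradiction, Lions-type concentration analysis, a L\'evy/rescaling step, Gromov precompactness plus ${\sf RCD}$-stability, Mosco convergence of Cheeger energies, a rigidity theorem for the sharp Sobolev inequality in the limit, and a pull-back to the approximating manifolds — matches the paper's proof in Section \ref{sec:sketch} and the supporting machinery of Theorem \ref{thm:CC_Sobextremals}. However, there is one genuine error and one misdescribed step.

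The error is the identification of the limit space. You write that invoking the ${\sf RCD}$ version of Theorem \ref{thm:rigidity sob main avr} yields that $X_\infty$ is isometric to $\R^d$ and $u_\infty$ is a classical Aubin--Talenti bubble. This is false. Theorem \ref{thm:rigidity sob main avr} is a rigidity statement for \emph{smooth non-compact manifolds}; there, the absence of singularities forces $\R^d$. But your limit $X_\infty$ is only an ${\sf RCD}(0,d)$ space, and the correct rigidity statement there is Theorem \ref{thm:sob avr rcd}: equality forces $X_\infty$ to be an \emph{$N$-Euclidean cone} (possibly with ${\sf AVR}(X_\infty)<1$, and in fact ${\sf AVR}(X_\infty)=\lim_n{\sf AVR}(M_n)$, which can be strictly less than $1$), and $u_\infty$ to be a radial bubble centered at a tip of that cone. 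Concluding $X_\infty\cong\R^d$ would be internally inconsistent — it would force ${\sf AVR}(M_n)\to 1$, which is not given. The pull-back step survives once stated correctly, precisely because the extremal profile is a radial function of distance from the cone tip, so it can be transported to functions $u_{a_n,b_n,z_n}$ on $M_n$ with $z_n\to z_0$ along the pmGH convergence.

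The misdescribed step is the final paragraph about controlling the scales $r_n$. In the paper there is no separate analysis of $r_n\to 0$ versus $r_n\to\infty$, nor any appeal to Corollary \ref{cor:non existence}. The scales are determined once and for all by the L\'evy-type normalization $\int_{B_{t_n}(y_n)}|u_n|^{2^*}\,d\mathrm{Vol}_{g_n}=1-\eta$ with $\eta=\eta_d/2$, and after rescaling by $\sigma_n=t_n^{-1}$ one is in the setting of Theorem \ref{thm:CC_Sobextremals}, which handles all behaviors of $(t_n)$ uniformly. Also note the paper runs the concentration analysis on the measures $|u_n|^{2^*}\,d\mathrm{Vol}_{g_n}$ (probability measures after normalization $\|u_n\|_{L^{2^*}}=1$), not $|\nabla u_n|^2\,d\mathrm{Vol}_{g_n}$; dichotomy is then ruled out by the strict concavity of $t\mapsto t^{2/2^*}$, not merely by a subcritical Sobolev ratio estimate as you suggest.
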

We point out that in general, the family $u_{a,b,z_0}$is not a family of extremizers (recall Corollary \ref{cor:non existence}). Anyhow, we shall refer to any of the above as Euclidean bubbles. Finally, notice that Theorem \ref{thm:rigidity sob main avr} is stated only for $p=2$, as it was obtained in \cite{NobiliViolo24} only for this exponent (see Remark \ref{rmk:regularity PDE}).

\medskip 

We pass to a second setting of spaces with positive Ricci curvature lower bounds. Let us consider a closed $d$-dimensional Riemannian manifold $(M,g)$, $d>2,$ satisfying
\[
{\sf Ric}_g \ge (d-1).
\]
Recall that, by the maximal diameter theorem it holds ${\rm diam}(M)\le \pi$. Under these assumptions, the same Sobolev inequality \eqref{eq:Sob sphere} as in the sphere  is valid:
\begin{equation}
\|u\|_{L^{2^*}(\nu)}^2\le \frac{2^*-2}{d} \| \nabla u\|_{L^{2}(\nu)}^2 + \| u\|_{L^2(\nu)}^2, \quad \forall u \in W^{1,2}(M),
\label{eq:Sob main intro}
\end{equation}
where all the norms are computed with the renormalized volume form $\nu = {\rm Vol}_g/{\rm Vol}_g(M)$.  This result dates back to \cite{Said83} but several different proofs are by now available \cite{Bakry94,BL96,Fontenas97,Hebey99,BakryGentilLedoux14,DGZ20}. 
Since constant functions are Sobolev, as $M$ is compact, it is not possible to replace any constant $c<1$ in front of the $L^2$ term, otherwise the inequality would be violated.
Hence, we can introduce a notion of optimal Sobolev constant by setting
\begin{equation}
A^{\rm opt}(M) \coloneqq \sup_{\substack{u \in W^{1,2}(M), \\ u\neq cst.}} \frac{\|u\|_{L^{2^*}(\nu)}^2- \| u\|_{L^2(\nu)}^2}{\| \nabla u\|_{L^{2}(\nu)}^2 }.
\label{aopt intro}
\end{equation}
We refer again to \cite{Hebey99} and Section \ref{sec:AB-program} for more details. Therefore, now \eqref{eq:Sob main intro} takes simply the form
\begin{equation}
    A^{\rm opt}(M)\le \frac{2^*-2}{d} \overset{\eqref{eq:Sob sphere}}{=} A^{\rm opt}(\S^d). \label{intro: AM < AS}
\end{equation}
At this point, the program of stability for \eqref{eq:Sob main intro} is at a fork. Either one investigates the existence and stability properties of extremal functions $ \mathcal M(A)$ for $A^{\rm opt}(M)$, i.e.\ $v \in \mathcal M(A)$ if $v$ is a maximizers of \eqref{aopt intro}, or one instead tries to understand the rigid scenario in the comparison inequality \eqref{intro: AM < AS} and proceeds towards a slightly different stability program. The former approach, which has been undertaken in \cite{NobiliParise24}, produced a quantitative stability estimate of the following kind
\[
        \frac{ A^{\rm opt}(M)\|\nabla u\|^2_{L^2(\nu)} +\|u\|_{L^2(\nu)}^2}{\| u\|_{L^{2^*}(\nu)}^2} - 1 \ge C \left( \inf_{v \in {\mathcal M}(A)} \frac{\| u- v\|_{W^{1,2}(M)} }{\|u\|_{W^{1,2}(M)}} \right) ^{2+\gamma}
\]
for some constants $C>0,\gamma\ge 0$ depending on the manifold and under a suitable strict bound on the optimal constant $A^{\rm opt}(M)$. See Theorem \ref{thm:main quant compact spaces} for the precise (more general) statement. 

Instead, given the emphasis on Ricci lower bounds in this note, we next explore the latter approach. The combination of \cite{NobiliViolo21,NobiliViolo24,NobiliViolo24_PZ} gives:
\begin{theorem}\label{thm:into main compact}
    For every $d>2$, $\eps>0$, there is $\delta\coloneqq \delta(\eps,d)>0$ and a constant $C_d>0$ such that: for every $d$-dimensional Riemannian manifold $(M,g)$ with ${\sf Ric}_g\ge (d-1)$, it holds:
    \begin{itemize}
        \item[{\rm i)}] {\sc Rigidity}.   $A^{\rm opt}(M) = A(\S^d)$ if and only if $M$ is isometric  to the round sphere $\S^d$; 
        \item[{\rm ii)}] {\sc Quantitative geometric stability}. It holds
        \[
        C_d(\pi - \diam(M))^d\le  A(\S^d)-A^{\rm opt}(M);
        \]
        \item[{\rm iii)}]  {\sc Qualitative functional stability}. If for some non-constant $u \in W^{1,2}(M)$ it holds
        \[
           \frac{\|u\|_{L^{2^*}(\nu)}^2- \| u\|_{L^2(\nu)}^2}{\| \nabla u\|_{L^{2}(\nu)}^2 }  >  \frac{2^*-2}{d}-\delta,
        \]
        then there exist $a \in \R$, $b \in [0,1)$ and $z_0 \in M$ such that
        \begin{equation}\label{eq:close to bubble}
           \frac{\| \nabla(u- v_{a,b,z_0} )\|_{L^2(\nu)} + \|u-u_{a,b,z_0}\|_{L^{2^*}(\nu)}}{\|u\|_{L^{2^*}(\nu)}} \le \eps,
       \end{equation}
    where $v_{a,b,z_0}\coloneqq a(1-b\cos(\sfd_g(\cdot ,z_0))^{\frac{2-d}{2}}$.
    \end{itemize}
\end{theorem}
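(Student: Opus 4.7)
The rigidity in (i) follows directly from the quantitative geometric stability (ii), once the latter is established: if $\aopt(M) = A(\S^d)$, then (ii) forces $(\pi - \diam(M))^d \le 0$, and together with Myers' bound $\diam(M)\le\pi$ this gives $\diam(M)=\pi$, whence Cheng's maximal diameter theorem yields an isometry with $\S^d$. The converse direction is \eqref{eq:Sob sphere}. So the real work is concentrated in (ii) and (iii).

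For (ii), the plan is to reduce the variational problem defining $\aopt(M)$ to a one-dimensional model on a spherical cap of aperture $D = \diam(M)$. Concretely, for a non-constant $u \in W^{1,2}(M)$, I would apply a P\'olya-Szeg\H{o} symmetrization adapted to the curvature lower bound ${\sf Ric}_g \ge d-1$ to produce a radial rearrangement $u^\sharp$ on the model sphere $\S^d$, supported in a geodesic ball of radius $D$ around a pole, with the same $L^{2^*}$- and $L^2$-norms as $u$ and with $\|\nabla u^\sharp\|_{L^2}\le \|\nabla u\|_{L^2}$. This reduces the upper bound for $\aopt(M)$ to the analogous optimal constant $A_{\mathrm{cap}}(D)$ of a spherical cap. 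An explicit one-dimensional analysis, via the Euler-Lagrange equation on $[0,D]$ with weight $\sin^{d-1}(r)$ and a Taylor expansion near $D=\pi$, should then give $A_{\mathrm{cap}}(D) \le A(\S^d) - C_d(\pi - D)^d$, where the power $d$ reflects the volume of the missing antipodal cap of aperture $\pi-D$.

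For (iii), I would argue by contradiction within the synthetic framework of $\RCD$ spaces. Suppose for some $\eps > 0$ there are $d$-dimensional manifolds $(M_n, g_n)$ with ${\sf Ric}_{g_n} \ge d-1$ and non-constant $u_n \in W^{1,2}(M_n)$ whose Sobolev ratios converge to $\frac{2^*-2}{d}$, yet whose bubble-distance in \eqref{eq:close to bubble} stays above $\eps$. After normalizing $\|u_n\|_{L^{2^*}(\nu_n)}=1$, the Sobolev bound yields uniform $W^{1,2}$-control. Gromov precompactness (exploiting the diameter bound from Myers) then produces, up to subsequences, a measured Gromov-Hausdorff limit $(M_n, \sfd_{g_n}, \nu_n) \to (\X, \sfd, \nu)$ with $(\X, \sfd, \nu)$ a non-collapsed $\RCD(d-1, d)$ space. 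By the Rellich-Kondrachov-type compactness for mGH-converging sequences, $u_n$ converges strongly in $L^{2^*}$ and weakly in $W^{1,2}$ to some $u_\infty \in W^{1,2}(\X)$; passing to the limit in the Sobolev inequality, known to hold with constant $\frac{2^*-2}{d}$ on every $\RCD(d-1,d)$ space, yields that $u_\infty$ is a non-constant extremizer on $\X$.

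The main obstacle is then the \emph{non-smooth rigidity of extremizers}: one needs to show that a non-constant extremizer of the sharp Sobolev inequality on an $\RCD(d-1,d)$ space forces $(\X,\sfd,\nu)$ to be isomorphic as a metric measure space to $\S^d$, with $u_\infty$ of the form $v_{a,b,z_0}$. This is the non-smooth counterpart of the rigidity classically arising in the Bakry-Ledoux approach, and is precisely the ingredient extracted by combining the rigidity analysis for the P\'olya-Szeg\H{o} inequality on $\RCD$ spaces developed in \cite{NobiliViolo21, NobiliViolo24, NobiliViolo24_PZ}. Once this rigidity is in hand, the contradiction closes by transporting the bubble approximating $u_\infty$ on $\X$ back to $M_n$ via the approximating mGH maps, producing bubbles on $M_n$ that violate the standing bubble-distance lower bound.
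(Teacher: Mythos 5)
For parts (i) and (ii), your approach tracks the paper's: derive rigidity from quantitative geometric stability via Cheng's maximal diameter theorem, and obtain the quantitative estimate via a diameter-improved P\'olya--Szeg\H{o} rearrangement to a spherical-cap model (this is indeed the key ingredient for Theorem~\ref{thm:main quant compact spaces} and in the same spirit of B\'erard--Besson--Gallot). Those parts are essentially correct.

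Part (iii), however, has a genuine gap. You claim that ``by Rellich--Kondrachov-type compactness for mGH-converging sequences, $u_n$ converges strongly in $L^{2^*}$.'' This is false: $2^*$ is the \emph{critical} Sobolev exponent, and Rellich--Kondrachov compactness yields strong $L^q$-convergence only for $q<2^*$, not at $q=2^*$. Precisely at the critical exponent, concentration into bubbles is a genuine loss-of-compactness mechanism, and your argument breaks down exactly in the worst case: if $u_n$ concentrates at a single point, then $u_n \weakto 0$ in $L^{2^*}$, $u_\infty\equiv 0$, and there is no non-trivial limiting extremizer to analyze. This is precisely what the paper confronts by rescaling $(Y_n,\rho_n,\mu_n)\coloneqq(M_n,\sigma_n\sfd_{g_n},{\rm Vol}_{g_n}(M_n)^{-1}\sigma_n^d{\rm Vol}_n)$ with $\sigma_n$ chosen via a L\'evy-type mass condition, and then invoking the generalized concentration-compactness principle (Theorem~\ref{thm:CC_Sobextremals}) on the rescaled spaces, which are ${\sf RCD}(0,d)$, not ${\sf RCD}(d-1,d)$.

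Moreover, you implicitly assume that the limit space is always a metric measure sphere and $u_\infty$ a spherical bubble. But the rescaling factors may blow up ($\sigma_n\uparrow\infty$), and then the limit $Y$ is a \emph{non-compact} $d$-Euclidean cone and $u_\infty$ is a \emph{Euclidean} bubble --- not $\S^d$ and not a spherical bubble. The argument therefore requires the dichotomy: either $\sigma_n\to\sigma<\infty$ (non-concentration: $Y$ is a $d$-spherical suspension, $u_\infty$ a spherical bubble, handled by compact rigidity), or $\sigma_n\to\infty$ (concentration: $Y$ is a $d$-Euclidean cone, $u_\infty$ a Euclidean bubble, handled by Theorem~\ref{thm:sob avr rcd}). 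In the concentrating case one then needs the additional, nontrivial matching fact that a concentrated spherical bubble is locally close to a Euclidean bubble (cf.\ the paper's reference to \cite[Lemma 7.3]{NobiliViolo24}), in order to convert the Euclidean-bubble approximation on $Y$ into a spherical-bubble approximation on $M_n$ and close the contradiction with \eqref{eq:contradiction intro}. These steps are not optional refinements; without them the proposed argument does not go through.
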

Again, we point out the crucial fact that now the family of functions $v_{a,b,z}$ appearing in \eqref{eq:close to bubble} is explicit but, in general, they are not maximizers for $A^{\rm opt}(M)$ in \eqref{aopt intro}. As they share the same profiles of the extremal family on $\S^d$ (recall \eqref{eq:Sob sphere},\eqref{eq:spherical bubbles}), we shall refer to them as spherical bubbles. 

\subsection{Overview of the methods and organization}
In this part, we give a quick overview of the main strategy to prove the main functional stability results, and then describe how this note is organized.  We shall mainly restrict our attention to the proof of Theorem \ref{thm:qualitative SobAVR intro}. This result is easier to obtain compared to Theorem \ref{thm:into main compact} but contains all the essential ideas.

\medskip 

Arguing by contradiction in Theorem \ref{thm:qualitative SobAVR intro}, { for every $\eps>0$} we get a sequence of manifolds satisfying for all $n\in\N$ 
\[
    (M_n,g_n) \text{ with } {\sf Ric}_{g_n} \ge 0,\qquad {\sf AVR}(M_n) >V,
\]
and functions $0\neq u_n \in \dot W^{1,2}(M_n)$, without loss of generality with $\|u_n\|_{L^{2^*}(M_n)} =1$, so that {
\[
\frac{\|u_n\|_{L^{2^*}(M_n)}}{\|\nabla u_n\|_{L^2(M_n)}} - {\sf AVR}(M_n)^{-\frac 1d}S_{d,2}\to 0,\qquad \text{as }n\uparrow\infty,
\]
but satisfying for all $n\in\N$ the following
\[
    \inf_{a,b,z_0} \|\nabla (u_n-u_{a,b,z_0})\|_{L^2(M_n)}>\eps.
\]
}As for the standard Lions concentration compactness strategy \cite{Lions84,Lions85}, one needs to look at the sequence of probability measures $\nu_n \coloneqq |u_n|^{2^*}{\rm Vol}_{g_n}$ and study its behavior. The major issues are: $(u_n)$ might fail to be pre-compact in $L^{2^*}$, exactly because of the symmetries that are acting on the family of Euclidean bubbles; the manifolds $M_n$ are not fixed, suggesting that one needs to allow suitable non-smooth limits and employ a generalized Sobolev calculus.

In a nutshell, the idea will be to work with a generalized notion of manifolds with Ricci curvature lower bounds to gain stability and pre-compactness results both at the level of the sequence of manifolds and the sequence of functions. In particular, our main proof argument for the stability results will consist of the combination of three main ingredients:
\begin{itemize}
    \item[{\rm i)}] Calculus and non-smooth spaces with Ricci curvature lower bounds in a synthetic sense;
    \item[{\rm ii)}] Fine P\'olya-Szeg\H{o} rearrangement inequalities with applications to sharp Sobolev inequalities on non-smooth spaces;
    \item[{\rm iii)}] Generalized concentration compactness principles on varying spaces.
\end{itemize}
To each of the above, we shall dedicate a section to give an overview of the main methods and further interesting details. With respect to the original works where these results were obtained, we shall eventually simplify some statements and give proofs or overviews of proofs adopting, when possible, more direct approaches.  

\medskip

This note is organized as follows: in Section \ref{sec:3 ingredients} we expand on the three ingredients discussed above; in Section \ref{sec:sketch}
we present our main proof-argument to deduce Theorem \ref{thm:qualitative SobAVR intro} where all the three ingredients are combined, and subsequently comment on the modifications of this strategy to obtain the functional stability result iii) in Theorem \ref{thm:into main compact}; finally, in Section \ref{sec:further}, we present related quantitative stability results and conclude by presenting some open questions.
\section{Three main ingredients}\label{sec:3 ingredients}
The aim of this section is to present the three main ingredients that combined yield the proof of Theorem \ref{thm:qualitative SobAVR intro}. We shall also expand on some relevant details.
\subsection{Calculus and {\sf RCD}-spaces}
In this part, we shall introduce the notion of Sobolev functions in the category of metric measure spaces and then consider suitable \emph{synthetic} generalizations of Riemannian manifolds with Ricci curvature lower bounds. We also give a brief overview of this research field focusing only on the relevant properties for our stability program. 

\medskip

Let us start by discussing the first-order calculus on metric measure spaces and we set up some notation. We recall that a metric measure space is a triple  $(\X,\sfd,\mm)$ where $(\X,\sfd)$ is a complete and separable metric space and $\mm$ is a non-negative, non-zero and boundedly finite Borel measure. We denote $C(\X),C_b(\X),C_{bs}(\X)$ respectively the space of continuous functions, continuous and bounded functions and continuous and boundedly supported functions on $\X$. By $\Lip(\X),\Lip_{bs}(\X)$, we denote respectively the collection of Lipschitz functions and boundedly supported Lipschitz functions and by $\lip(u)$ the local Lipschitz constant of $u\colon \X\to \R$. By $\PP(\X)$ we denote the collection of Borel probability measures on $\X$. For any $p \in (1,\infty)$, we denote by $L^p(\X),L^p_{loc}(\X)$ respectively the space of $p$-integrable functions and $p$-integrable functions on a neighborhood of every point (up to $\mm$-a.e.\ equality relation) on $\X$. Equivalently, we shall also write $L^p(\mm),L^p_{loc}(\mm)$. 

Next, we define the \emph{Cheeger} energy
\[
\rmCh_p(u)\coloneqq \inf\left\{ \int \lip(u_n)^p\,\d \mm \colon (u_n)\subseteq \Lip(\X), \, u_n \to u \text{ in }L^p(\mm)\right\},
\]
setting the infimum to $+\infty$ as customary when no such sequence $(u_n)$ exists. Then, the Sobolev space $W^{1,p}(\X)$ is defined as the collection of $u \in L^p(\mm)$ so that $\rmCh_p(u)<\infty$ equipped with the usual norm $\|u\|_{W^{1,p(\X)}}^p \coloneqq \|u\|_{L^p(\mm)}^p + \rmCh_p(u)$. Relevant properties of the Sobolev calculus have been deeply analyzed in the literature (see \cite{Cheeger00,Shan00}), here we are following the equivalent axiomatization given by \cite{AmbrosioGigliSavare11-3}. Accordingly, we recall that the Cheeger energy, when finite, admits the representation
\[
\rmCh_p(u) = \int |\nabla u|^p\,\d \mm,
\]
for a suitable function $|\nabla u| \in L^p(\X)$ called minimal $p$-weak upper gradient. { In particular, $\rmCh_p(u)$ always represents as an integral functional}. Similarly, and thanks to locality \cite{AmbrosioGigliSavare11-3} of minimal $p$-weak upper gradients, we can introduce the space $W^{1,p}_{loc}(\X)$ as the space of $u \in L^p_{loc}(\mm)$ so that $\eta u \in W^{1,p}(\X)$ for all $\eta \in \Lip_{bs}(\X)$. Also, by slight abuse of notation, we shall always write $\|\nabla u\|_{L^p(\mm)}$ in place of $\| |\nabla u|\|_{L^p(\X)}$. We shall not insist on the dependence of $|\nabla u|$ on the exponent $p$, as in all the settings appearing in this note this is never an issue, see \cite{Cheeger00,GigliHan14,GigliNobili21}.

\medskip

In general, we cannot expect any sort of Sobolev embeddings to hold, as this genuinely depends on the underlying regularity of the ambient space. Since in this note we are interested in Sobolev inequalities, we start discussing notions of Ricci curvature lower bounds in this general setting.

The search for synthetic treatments of Ricci curvature lower bounds began with the Lott-Sturm-Villani theory \cite{Lott-Villani09,Sturm06I,Sturm06II} (see also \cite{Villani2016,Sturm24_Survey} and references therein) after the celebrated theory of Ricci limit spaces due to Cheeger-Colding \cite{Cheeger-Colding96,Cheeger-Colding97I,Cheeger-Colding97II,Cheeger-Colding97III}.  This notion is formulated via optimal transport (\cite{Villani09}) by requiring convexity of Entropy functionals in the Wasserstein space. In this note, we shall rather restrict our analysis to the \emph{Riemannian} subclass, the so-called, ${\sf RCD}(K,N)$ condition for $K\in\R, N\ge 1$, which is the requirement of a metric measure space $\Xdm$ to have ``Ricci curvature bounded below by $K$ and dimension bounded above by $N$''. For brevity reasons, we shall not introduce this notion while focusing instead on key aspects and examples. Here we just recall that the {\sf RCD}-condition was formulated first in the infinite-dimensional setting \cite{AmbrosioGigliSavare11-2} and then proposed in the finite-dimensional setting \cite{Gigli12} { to single out non-Riemannian structures. In particular, on an {\sf RCD}-space $\Xdm$, the functional
\[
W^{1,2}(\X) \ni u \mapsto \int |\nabla u|^2 \, \d \mm,
\]
satisfies the parallelogram identity, thus resembling the context of Riemannian manifolds.} We mention also additional important contributions \cite{AmbrosioGigliSavare12,AGMR15,AmbrosioMondinoSavare13-2,EKS15,CM16} that lead to many developments of this theory and thanks to which important equivalent formulations were identified. For a complete account, we refer to \cite{AmbICM}.

{Before giving some examples of ${\sf RCD}$-spaces and recall important properties, we recall a key ingredient on ${\sf RCD}(0,N)$ space $\Xdm$ that will be used several times.  In this setting, the Bishop-Gromov monotonocity holds (see \cite{Sturm06I,Sturm06II}): for all $x\in\X$ we have that
\[
r\mapsto \frac{\mm(B_r(x))}{\omega_Nr^N},\qquad\text{is non-increasing}.
\]
In particular, the following limit is well-defined and independent on $x$
\[
{\sf AVR}(\X) \coloneqq \lim_{R\to \infty} \frac{\mm(B_r(x))}{\omega_Nr^N} \in [0,\infty).
\]
As in the manifold case, when  ${\sf AVR}(\X)>0$, we say that $\X$ has Euclidean volume growth.}
\subsubsection{Examples}\label{sec:examples RCD} Let us discuss key examples of non-smooth spaces for this note. First of all, the {\sf RCD} class is consistent with the smooth category, and indeed it is a generalization of the class of Riemannian manifolds. Indeed, if $(M,g)$ is a $d$-dimensional Riemannian manifold with ${\sf Ric}_g\ge K$ and $d\le N$, then $(M,\sfd_g,{\rm Vol}_g)$  when regarded as a metric measure spaces with the geodesic distance and its volume measure, is an ${\sf RCD}(K,N)$ space. See \cite{SturmVonRenesse09} for several equivalences.

However, the class is much richer and contains also weighted Riemannian manifolds with modified Bakry-\'Emery Ricci tensor bounded below, and singular structures such as Ricci limit spaces  \cite{Cheeger-Colding96,Cheeger-Colding97I} and Alexandrov spaces (\cite{Petrunin11,ZhangZhu10,GKKO}). For us, it will be important to recall some operations that are closed within this class, namely scalings and cones.

Let $\Xdm$ be a metric measure space satisfying the ${\sf RCD}(0,N)$ condition for some $N\ge 1$ and let $\sigma>0$ be a number. Then, the metric measure space obtained by scaling
\[
(\X_\sigma,\sfd_\sigma,\mm_\sigma) \coloneqq (\X,\sigma\cdot \sfd,\sigma^N\cdot \mm),
\]
is again an ${\sf RCD}(0,N)$ space, where we set $\sigma \cdot \sfd (x,y) \coloneqq \sigma \sfd(x,y)$ for all $x,y \in \X$ and $\sigma^N\cdot \mm(B) = \sigma^N \mm(B)$ for every $B\subset \X$ Borel. 

For $N>1$, recall a that a $N$-Euclidean cone over a metric measure space $(Y,\rho,\mu)$ is defined  to be the space $C_N(Y)\coloneqq  Y\times [0,\infty)/(Y\times \{0\})$ endowed with the distance and the measure
\begin{align*}
    &\sfd((t,y),(s,y'))\coloneqq \sqrt{t^2+s^2-2st\cos(\rho(y,y')\wedge \pi)},\\
    &\mm\coloneqq t^{N-1} \d t\,\otimes\, \mu, 
\end{align*}
for all $t,s \in [0,\infty), y,y' \in Y$. The point $Y \times \{0\}$ is called the tip of the cone. It follows then by \cite{Ketterer13} (and thanks to \cite{CM16}) that the ${\sf RCD}$-condition is stable under the action of taking cones, namely, we have for $N>2$ that 
\[
C_N(Y) \text{ is an }{\sf RCD}(0,N)\text{-space}\qquad \iff \qquad Y\text{ is an }{\sf RCD}(N-2,N-1)\text{-space}.
\]
Similarly, we recall the notion of $N$-spherical suspension over a metric measure space $(Y,\mm_Y,\sfd_Y)$ defined as the space $S_N(Y) \coloneqq  Y\times \left[0,\pi \right]/(Y\times \left\{0,\pi\right\})$ endowed with the distance and the measure
\begin{align*}
    &\sfd((t,y),(s,y'))\coloneqq  \cos^{-1}\big(\cos(s)\cos(t)+\sin(s)\sin(t)\cos\left(\rho(y,y')\wedge\pi\right)\big),\\ 
    &\mm\coloneqq  \sin^{N-1}(t) \d t\,\otimes\, \mu.
\end{align*}
for all $t,s \in [0,\pi], y,y' \in Y$. The points $\{0\}\times Y,\{\pi\}\times Y$ are called tips of the suspension. Again by \cite{Ketterer13}, the ${\sf RCD}$-condition is stable under the action of taking suspension: if $N>2$ it holds
\[
S_N(Y) \text{ is an }{\sf RCD}(N-1,N)\text{-space}\qquad \iff \qquad Y\text{ is an }{\sf RCD}(N-2,N-1)\text{-space}.
\]

\subsubsection{Precompactness under Gromov-Hausdorff convergence}\label{sec:pmGH} One of the main reasons to consider synthetic notions of Ricci curvature lower bounds is to gain pre-compactness and stability properties under a weak notion of convergence of metric structures dating back to Gromov \cite{Gromov07}. Here, we shall adopt the so-called pointed measure Gromov-Hausdorff convergence of metric measure space following  \cite{GigliMondinoSavare13}. Even though this is not standard, it better suits the goal of this note and it is equivalent, in the case of finite dimensional ${\sf RCD}$-spaces, to previous notions considered in the literature (see again \cite{GigliMondinoSavare13}).

Set $\bar \N \coloneqq \N \cup\{\infty\}$. A \emph{pointed} metric measure space is a quadruple $(\X,\sfd,\mm,x)$ where $\Xdm$ is a metric measure space and $x \in \X$. A pointed ${\sf RCD}$-space is then a pointed metric measure space  $(\X,\sfd,\mm,x)$  so that $\Xdm$ is an ${\sf RCD}$-space.
\begin{definition}
Let $(\X_n,\sfd_n,\mm_n,x_n)$ be pointed metric measure spaces for $n \in \bar \N$. We say that $(\X_n,\sfd_n,\mm_n,x_n)$ converges to $(\X_\infty,\sfd_\infty,\mm_\infty,x_\infty)$ in the pointed measured Gromov Hausdorff topology, provided there are a metric space $(\Z,\sfd)$ and isometric embeddings $\iota_n \colon \X_n \to \Z$ for all $n \in \bar \N$ satisfying
\[
    (\iota_n)_\sharp \mm_n \weakto (\iota_\infty)_\sharp\mm_\infty,\qquad \text{in duality with }C_{bs}(\Z),
\]
and $\iota_n(x_n)\to \iota_\infty(x_\infty)$ as $n\uparrow\infty$. The metric space $(\Z,\sfd)$ is called the \emph{realization of the convergence}. In this case, we shortly say that $\X_n$ pmGH-converges to $\X_\infty$ and write $\X_n \overset{pmGH}{\to}\X_\infty$. 
\end{definition}

The key results are then the pre-compactness and the stability properties of the ${\sf RCD}$-condition, referring to \cite{Gigli10,AmbrosioGigliSavare11-2,GigliMondinoSavare13} (recall also \cite{Sturm06I,Sturm06II,LV09}) and thanks to Gromov's precompactness \cite{Gromov07}). 
\begin{theorem}\label{ref:precompact RCD}
    Let $(\X_n,\sfd_n,\mm_n,x_n)$ be pointed ${\sf RCD}(K_n,N_n)$ spaces for $n\in\N$ and for some $K_n \in \R,N_n \in [1,\infty)$ satisfying $K_n \to K \in \R, N_n \to N \in [1,\infty)$. Suppose that $\mm_n(B_1(x_n)) \in (v,v^{-1})$ for some $v>0$ independent on $n$. Then, there exist a pointed ${\sf RCD}(K,N)$ space $(\X_\infty,\sfd_\infty,\mm_\infty,x_\infty)$ and a subsenquence $(n_k)$ such that $\X_{n_k} \overset{pmGH}{\to}\X_\infty$ as $k\uparrow\infty$.
\end{theorem}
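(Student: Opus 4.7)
The plan is to split the statement into two independent tasks: first, producing a pmGH-limit by a metric precompactness argument combined with a measure compactness argument; second, verifying that the limit inherits the ${\sf RCD}(K,N)$ structure by stability of the synthetic curvature-dimension condition. Both tasks rely crucially on the fact that ${\sf RCD}(K,N) \Rightarrow {\sf CD}(K,N)$ and hence on the Bishop--Gromov inequality.

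For precompactness of the metric part, I would first exploit that on each $(\X_n,\sfd_n,\mm_n)$ the Bishop--Gromov volume comparison gives, for any fixed $R>0$, a uniform bound
\[
\mm_n(B_R(x_n)) \le C(K,N,R)\, \mm_n(B_1(x_n)) \le C(K,N,R)\, v^{-1},
\]
and a uniform doubling estimate on balls around $x_n$ of bounded radius. The doubling property implies a uniform bound on the number of $\eps$-separated points in $B_R(x_n)$, depending only on $K,N,R,\eps,v$. By Gromov's classical precompactness theorem, $(\X_n,\sfd_n,x_n)$ admits a subsequence converging in the pointed Gromov--Hausdorff sense to some pointed complete separable metric space $(\X_\infty,\sfd_\infty,x_\infty)$. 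Up to relabelling, I may assume the full sequence converges and fix a common realization $(\Z,\sfd)$ together with isometric embeddings $\iota_n\colon \X_n \to \Z$ for $n\in\bar\N$, obtained by the standard construction that glues $\eps_k$-approximations along a vanishing sequence $\eps_k\down 0$.

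For the measure part, the uniform local finiteness $(\iota_n)_\sharp \mm_n(B_R(\iota_\infty(x_\infty))) \le C(K,N,R,v)$ guarantees, by a Banach--Alaoglu-type diagonal argument against a countable dense family in $C_{bs}(\Z)$, the existence of a further subsequence and a non-negative boundedly finite Borel measure $\mm_\infty$ on $\Z$ such that $(\iota_n)_\sharp \mm_n \weakto \mm_\infty$ in duality with $C_{bs}(\Z)$. Since the supports are contained in $\iota_\infty(\X_\infty)$ (by Hausdorff convergence of compact subsets), we may view $\mm_\infty$ as a measure on $\X_\infty$, and the lower bound on $\mm_n(B_1(x_n))$ passes to the limit via a suitable cut-off, ensuring $\mm_\infty \not\equiv 0$. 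This gives the desired pmGH-convergence $\X_n \overset{pmGH}{\to}\X_\infty$.

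It remains to check that the limit is ${\sf RCD}(K,N)$. This is the main technical point and I would import it from the literature in two pieces. First, stability of the curvature-dimension inequality ${\sf CD}(K_n,N_n) \Rightarrow {\sf CD}(K,N)$ under pmGH convergence, as proved by Lott--Villani and Sturm, follows from the continuity of optimal transport and entropy functionals along the converging measures. Second, stability of \emph{infinitesimal Hilbertianity}, which upgrades ${\sf CD}$ to ${\sf RCD}$: this is the Mosco-convergence of the Cheeger energies $\rmCh_{2,n} \to \rmCh_{2,\infty}$ in the sense of Gigli--Mondino--Savaré, which propagates the quadratic nature of $\rmCh_{2,n}$ to the limit. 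The hardest step is this second one, since one must transfer a pointwise differential condition through the varying ambient spaces; the standard way to bypass the difficulty is to phrase infinitesimal Hilbertianity as a linearity property of the heat flow and use its stability under pmGH convergence, a point already well developed in the references cited in the excerpt. Combining the two ingredients gives the ${\sf RCD}(K,N)$ property for $(\X_\infty,\sfd_\infty,\mm_\infty)$, completing the proof.
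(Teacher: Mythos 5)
The paper does not actually prove Theorem \ref{ref:precompact RCD}; it is quoted as a known result, with the proof delegated to the cited literature (Gromov for metric precompactness, Lott--Villani and Sturm for stability of ${\sf CD}$, and Gigli--Mondino--Savar\'e and Ambrosio--Gigli--Savar\'e for the ${\sf RCD}$ upgrade). Your outline reproduces that standard architecture, so there is no alternative route to compare against; the appropriate thing to check is whether your decomposition of the argument is sound, and in broad strokes it is. A few places where the exposition glosses over genuine subtleties that the references handle carefully: (i) For the metric precompactness you invoke Bishop--Gromov to get a uniform bound on the number of $\eps$-separated points in $B_R(x_n)$; this bound needs both a \emph{lower} and an \emph{upper} volume ratio estimate, and indeed follows from Bishop--Gromov alone without the normalization $\mm_n(B_1(x_n))\in(v,v^{-1})$, which is needed only to guarantee that the weak limit measure is nonzero and boundedly finite — worth separating these two roles of the hypothesis. (ii) Stability of the finite-dimensional curvature-dimension condition under pmGH is not quite the naive statement ``${\sf CD}(K_n,N_n)\Rightarrow{\sf CD}(K,N)$'' as originally proved by Lott--Villani and Sturm for compact/normalized spaces; for $K\ne 0$ and $N<\infty$ the directly stable object is the reduced condition ${\sf CD}^*(K,N)$, and one passes back to ${\sf CD}(K,N)$ via essential non-branching (a consequence of ${\sf RCD}$) and the globalization theorem. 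If you phrase the step as stability of ${\sf CD}^*$ plus globalization, the argument is airtight; as written it elides this distinction. (iii) The Mosco convergence of Cheeger energies you rely on for stability of infinitesimal Hilbertianity is itself proved in GMS under the ${\sf CD}(K,\infty)$-type hypotheses on the approximating spaces (not assuming ${\sf RCD}$), so the logical order is: establish ${\sf CD}^*$ in the limit, then Mosco convergence, then quadraticity in the limit by the stability of quadratic forms under Mosco convergence; your phrasing via ``linearity of the heat flow'' is equivalent but it is worth being explicit that Mosco convergence is already available at the ${\sf CD}$ level. None of these are errors of substance; they are spots where the blind sketch would need to be made precise to match the rigour of the cited proofs.
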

We also refer to \cite{Gigli23_working} for more on stability results in connection with Ricci curvature lower bounds.
\subsubsection{Sobolev calculus with varying base space}\label{sec:sob X varying}
Building on top of the pre-compactness and stability properties of the ${\sf RCD}$-condition, one can start looking at Sobolev functions $u_n \in W^{1,2}(\X_n)$ along a pmGH-converging sequence. The goal is to discuss semicontinuity and Rellich pre-compactness results for a generalized Sobolev calculus with varying base space. Here, the ${\sf RCD}$-condition will be essential, as the only chance to obtain such a result is to enforce uniform underlying regularity, see \cite{Gigli23_working} for a detailed treatment.

\medskip

We start recalling the notions of convergence of functions along pmGH-converging sequences \cite{Honda15,GigliMondinoSavare13,AmbrosioHonda17}, adopting the so-called \emph{extrinsic approach}, see \cite{GigliMondinoSavare13}.
\begin{definition}\label{def:lpconv}
 Let $(\X_n,\sfd_n,\mm_n,x_n)$ be pointed metric measure spaces for $n \in \bar \N$ and suppose that $\X_n \overset{pmGH}{\to}\X_\infty$ as $n\uparrow\infty$. Let $p \in (1,\infty)$ and fix a realization  of the convergence in $(\Z,\sfd)$. We say:
\begin{itemize}
\item[{\rm i)}] $f_n\in L^p(\mm_n)$ \emph{converges $L^p$-weak} to $f_\infty\in L^p(\mm_\infty)$, provided $\sup_{n \in \N}\|f_n\|_{L^p(\mm_n)}<\infty$ and $f_n\mm_n \weakto f_\infty\mm_\infty$ in duality with $C_{bs}(\Z)$;
\item[{\rm ii)}] $f_n\in L^p(\mm_n)$ \emph{converges $L^p$-strong} to $f_\infty\in L^p(\mm_\infty)$, provided it converges $L^p$-weak and $\limsup_n \|f_n\|_{L^p(\mm_n)} \le  \|f_\infty\|_{L^p(\mm_\infty)}$;
\item[{\rm iii)}] $f_n \in W^{1,p}(\X_n)$ \emph{converges $W^{1,p}$-weak} to $f_\infty \in L^p(\X_\infty)$ provided it converges $L^p$-weak and $\sup_{n \in \N} \| \nabla f_n \|_{L^p(\mm_n)}<\infty$;
    \item[{\rm iv)}] $f_n \in W^{1,p}(\X_n)$ \emph{converges $W^{1,p}$-strong} to $f_\infty \in W^{1,p}(\X_\infty)$ provided it converges $L^p$-strong and $\| \nabla f_n \|_{L^p(\mm_n)} \to \|\nabla f_\infty \|_{L^p(\mm_\infty)}$;
\item[{\rm v)}] $f_n\in L^p(\mm_n)$ \emph{converges $L^p_{loc}$-strong} to $f_\infty\in L^p(\mm_\infty)$, provided $\eta f_n$ converges $L^p$-strong to $\eta f_\infty$ for every $\eta \in C_{bs}(\Z)$.
\end{itemize}
\end{definition}
The first key stability result is the so-called Mosco-convergence of Cheeger energies proved in \cite{GigliMondinoSavare13} for possibly infinite dimensional spaces (see \cite{AmbrosioHonda17} for conclusion ii) and for the case $p\in(1,\infty)$).
\begin{theorem}[Mosco-convergence of $\rmCh_2$]\label{thm:mosco}
Let $(\X_n,\sfd_n,\mm_n,x_n)$ be pointed ${\sf RCD}({ K},N)$ space for some ${ K \in\R}, N \in [1,\infty)$ and for $n \in \bar \N$ and suppose that $\X_n \overset{pmGH}{\to}\X_\infty$ as $n\uparrow\infty$. Then:
\begin{itemize}
    \item[{\rm i)}]if $f_n\in L^2(\mm_n)$ converges $L^2$-weak to $f_\infty \in L^2(\mm_\infty)$, it holds
\begin{equation}
   \rmCh_2(f_\infty) \le \liminf_{n\to\infty} \rmCh_2(f_n); \label{eq:Gamma liminf}
\end{equation}
\item[{\rm ii)}]for any $f_\infty \in L^2(\mm_\infty)$, there exists $f_n \in L^p(\mm_n)$ converging $L^2$-strong to $f_\infty$ and it holds
\[  \limsup_{n\to\infty}  \rmCh_2(f_n) \le \rmCh_2(f_\infty).
\]
In particular, $f_n$ converges $W^{1,2}$-strong to $f_\infty$.
\end{itemize}
\end{theorem}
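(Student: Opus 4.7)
The strategy is to transport properties of the $L^2$-heat semigroup from $\X_n$ to $\X_\infty$ using its stability along pmGH-converging sequences of ${\sf RCD}(0,N)$-spaces, a well-established consequence of Theorem \ref{ref:precompact RCD}. In both halves of the statement the dimensional bound enters essentially through the regularizing properties of $P_t^n$.

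For part (i), reduce to the case $\liminf_n \rmCh_2(f_n) < \infty$ and extract a realizing subsequence; then $(|\nabla f_n|)_n$ is bounded in $L^2(\mm_n)$, and realizing all spaces in the ambient $(\Z,\sfd)$, a diagonal weak compactness argument against $C_{bs}(\Z)$-functions yields (up to a further subsequence) a nonnegative $G \in L^2(\mm_\infty)$ that is an $L^2$-weak limit of $|\nabla f_n|$. The task reduces to showing that $G$ is a $2$-weak upper gradient of $f_\infty$, since then
\begin{equation*}
\rmCh_2(f_\infty) \,\le\, \int G^2 \, \d \mm_\infty \,\le\, \liminf_n \int |\nabla f_n|^2 \, \d \mm_n
\end{equation*}
by weak lower semicontinuity. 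This upper-gradient property on the limit space can be extracted from a dual characterization of Cheeger's energy on ${\sf RCD}$ spaces: for example, pairing the energy dissipation identity $\tfrac{1}{2t}\|f - P_t f\|_{L^2}^2 \le \rmCh_2(f)$ with the $L^2$-strong convergence $P_t^n f_n \to P_t^\infty f_\infty$ at fixed $t > 0$, and then sending $t \to 0^+$ to recover the full Cheeger energy from its heat-smoothed versions.

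For part (ii), by density of $\Lip_{bs}(\X_\infty)$ in $W^{1,2}(\X_\infty)$ in energy norm (which holds on ${\sf RCD}(0,N)$ via heat-flow mollification) and a diagonal procedure it suffices to build a recovery sequence for $f_\infty \in \Lip_{bs}(\X_\infty)$. I would first extend $f_\infty$ via the McShane--Whitney formula from $\iota_\infty(\X_\infty) \subset \Z$ to $\tilde f \in \Lip_{bs}(\Z)$ with the same global Lipschitz constant, and set $f_n^0 \coloneqq \tilde f \circ \iota_n \in \Lip_{bs}(\X_n)$; $L^2$-strong convergence then follows directly from the definition of pmGH convergence. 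To correct the slope mismatch, smooth by the heat flow and set $f_n \coloneqq P_{s(n)}^n f_n^0$ with $s(n) \downarrow 0$ suitably tuned: combining $\rmCh_2(P_s h) \le \rmCh_2(h)$, the pointwise estimate $|\nabla P_s h|^2 \le P_s(|\nabla h|^2)$, and the stability $P_{s(n)}^n f_n^0 \to f_\infty$ in $L^2$, one gets $\limsup_n \rmCh_2(f_n) \le \rmCh_2(f_\infty)$. The final $W^{1,2}$-strong convergence is then immediate by combining this $\limsup$ with part (i).

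\textbf{Main obstacle.} The delicate step is energy matching in (ii): the McShane--Whitney extension controls only the global Lipschitz constant and not the pointwise slope, so the naive upper bound $\|\lip(f_n^0)\|_{L^2(\mm_n)}^2 \le \Lip(f_\infty)^2 \, \mm_n(\spt \tilde f)$ strictly exceeds $\rmCh_2(f_\infty)$ in general. The heat-flow regularization converts the crude slope into a genuine energy density that interacts well with pmGH-stability; coordinating the mollification scale $s(n) \downarrow 0$ with the extension error is the hardest point of the argument, and it relies on the full uniform ${\sf RCD}$-structure (local doubling, dimensional Bishop--Gromov, and the Bakry-\'Emery inequality).
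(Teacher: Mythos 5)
The paper does not reprove this theorem; it defers entirely to \cite{GigliMondinoSavare13} (infinite-dimensional Mosco convergence) and \cite{AmbrosioHonda17} (conclusion (ii) and general $p$). Your plan of attack — semigroup convergence, Lipschitz extension, Bakry--\'Emery smoothing — is in the right family of ideas, but as written it has two concrete gaps that would prevent it from closing.

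\emph{Part (i): the dissipation quantity recovers $0$, not $\rmCh_2$.} The inequality $\tfrac{1}{2t}\|f - P_t f\|_{L^2}^2 \le \rmCh_2(f)$ is correct, but by the spectral theorem $\tfrac{1}{2t}\|f - P_t f\|_{L^2}^2 = \tfrac{1}{2t}\int (1-e^{-\lambda t})^2\,\d\mu_f(\lambda) \to 0$ as $t\downarrow 0$ (the integrand is $O(\lambda^2 t)$ pointwise). You would need instead $\tfrac{1}{2t}\langle f - P_{2t}f, f\rangle_{L^2} = \tfrac{1}{2t}\bigl(\|f\|_{L^2}^2 - \|P_t f\|_{L^2}^2\bigr)$, which increases to $\rmCh_2(f)$ as $t\downarrow 0$. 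With that correction, and with $L^2$-strong convergence $P_t^n f_n \to P_t^\infty f_\infty$ at fixed $t$, one can indeed pass to the $\liminf$ directly without ever needing to show that the weak limit $G$ of $|\nabla f_n|$ is a $2$-weak upper gradient of $f_\infty$ — that intermediate claim in your sketch is both unproved and rendered superfluous by the semigroup route.

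\emph{Part (ii): the scaling $s(n)\downarrow 0$ cannot absorb the extension error.} Since $\rmCh_2(P_s h)$ is nonincreasing in $s$ and tends to $\rmCh_2(h)$ as $s\downarrow 0$, the estimate $\rmCh_2(P_{s(n)}^n f_n^0) \le \rmCh_2(f_n^0)$ degenerates to an equality in the limit $s(n)\downarrow 0$. But $\rmCh_2(f_n^0)$ is controlled only by $\Lip(\tilde f)^2\,\mm_n(\spt \tilde f)$, which typically strictly exceeds $\rmCh_2(f_\infty)$, as you yourself observe. There is no mollification scale $s(n)\downarrow 0$ that can reconcile the two, because vanishing-scale heat smoothing does not forget the pointwise slope of $f_n^0$. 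The workable version is to keep $s>0$ \emph{fixed}, obtain $P_s^n f_n^0 \to P_s^\infty f_\infty$ in $W^{1,2}$-strong sense (so $\rmCh_2(P_s^n f_n^0)\to \rmCh_2(P_s^\infty f_\infty)\le\rmCh_2(f_\infty)$), and diagonalize in $s$ afterwards. That in turn requires $W^{1,2}$-strong convergence of heat flows, which must be established separately — e.g.\ from locally uniform convergence of heat kernels, available on $\RCD(0,N)$ spaces from uniform Gaussian bounds — and which you implicitly invoke without noting that it is a major ingredient in its own right (and, via the Kuwae--Shioya theory, is essentially equivalent to the Mosco convergence you are proving, so care is needed to avoid circularity). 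As stated, the argument does not close.
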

Result i) should be regarded as a lower semicontinuity result of the Sobolev space along varying base space. A straightforward consequence of this is that Sobolev inequalities are \emph{stable} along pmGH-convergences of metric measure structure, following \cite[Lemma 4.1]{NobiliViolo21}.
\begin{lemma}[pmGH-Stability of Sobolev constants]\label{lem:sobolev stability}
    Let $(\X_n,\sfd_n,\mm_n,x_n)$ be pointed ${\sf RCD}({ K},N)$ space for some ${ K\in \R}, N \in [1,\infty)$ and for $n \in \bar \N$ and assume that for some $A_n \to A>0$ and $1<q<\infty$, it holds 
    \[
        \|u\|_{L^q(\mm_n)} \le A_n\| \nabla u\|_{L^2(\mm_n)},\qquad \forall u \in W^{1,2}(\X_n).
    \]
    If $\X_n \overset{pmGH}{\to}\X_\infty$ as $n\uparrow\infty$, then it holds
    \[
        \|u\|_{L^q(\mm_\infty)} \le A\| \nabla u\|_{L^2(\mm_\infty)},\qquad \forall u \in W^{1,2}(\X_\infty).
    \]
\end{lemma}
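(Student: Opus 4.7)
The plan is to approximate an arbitrary $u \in W^{1,2}(\X_\infty)$ by a recovery sequence on the $\X_n$ provided by Theorem \ref{thm:mosco}, apply the hypothesized Sobolev inequality on each $\X_n$ to this sequence, and then pass to the limit using a lower semicontinuity property of the $L^q$-norm along $L^2$-strong pmGH-convergence with uniform $L^q$-bounds.

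More precisely, I would fix $u \in W^{1,2}(\X_\infty)$; the inequality is trivial when $\|\nabla u\|_{L^2(\mm_\infty)} = +\infty$, so I may assume this quantity is finite. By part ii) of Theorem \ref{thm:mosco} there is a recovery sequence $u_n \in W^{1,2}(\X_n)$ that converges $W^{1,2}$-strongly to $u$; in particular $u_n$ converges $L^2$-strongly to $u$ and
\[
\|\nabla u_n\|_{L^2(\mm_n)} \longrightarrow \|\nabla u\|_{L^2(\mm_\infty)} \qquad \text{as } n\uparrow\infty.
\]
Applying the assumed Sobolev inequality on $\X_n$ to $u_n$ and combining with $A_n \to A$, I obtain
\[
\limsup_n \|u_n\|_{L^q(\mm_n)} \le \lim_n A_n\|\nabla u_n\|_{L^2(\mm_n)} = A\|\nabla u\|_{L^2(\mm_\infty)} < \infty,
\]
so that $(u_n)$ is uniformly bounded in $L^q$.

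To conclude, it remains to establish the semicontinuity estimate $\|u\|_{L^q(\mm_\infty)} \le \liminf_n \|u_n\|_{L^q(\mm_n)}$, which, chained with the inequality above, delivers the claim. When $q = 2$ this is immediate from the very definition of $L^2$-strong convergence in Definition \ref{def:lpconv}. For general $1 < q < \infty$, fixing a realization $(\Z,\sfd)$ of the pmGH-convergence and identifying the $u_n$ with their images under the isometric embeddings into $\Z$, the uniform $L^q$-bound combined with a Banach-Alaoglu argument lets me extract a subsequence along which $|u_n|^q \mm_n$ converges weakly as a Radon measure on $\Z$ to some $\mu$; the $L^2$-strong convergence of $u_n$ to $u$ then forces $\mu \ge |u|^q \mm_\infty$, whence the semicontinuity follows by integrating this inequality against the constant function one. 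This last step is the only genuinely delicate point of the proof, and it is by now a standard fact in the Sobolev calculus with varying base space (see, e.g., \cite{GigliMondinoSavare13, AmbrosioHonda17, Honda15}), so no additional effort is required beyond invoking it. Once this is in place, the chain of inequalities yields precisely the Sobolev inequality on $(\X_\infty,\sfd_\infty,\mm_\infty)$ with constant $A$.
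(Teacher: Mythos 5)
Your proof is correct and follows essentially the same route as the paper: construct a recovery sequence $u_n \to u$ in the $W^{1,2}$-strong sense via Theorem \ref{thm:mosco} ii), plug it into the assumed Sobolev inequality on $\X_n$, pass to the limit using $\|\nabla u_n\|_{L^2(\mm_n)} \to \|\nabla u\|_{L^2(\mm_\infty)}$, and close with lower semicontinuity of the $L^q$-norm along the pmGH-convergence. The only cosmetic differences are that the paper begins with $u \in \LIP_{bs}(\X_\infty)$ and appeals to density/arbitrariness, whereas you apply the recovery sequence directly to a generic $u \in W^{1,2}(\X_\infty)$ (both are fine since Theorem \ref{thm:mosco} ii) is stated for any $L^2$ function), and the paper simply cites the $L^q$-weak lower semicontinuity while you sketch it; your sketch could be streamlined by observing that $L^2$-strong convergence plus the uniform $L^q$-bound is exactly $L^q$-weak convergence in the sense of Definition \ref{def:lpconv}, from which the semicontinuity is immediate.
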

\begin{proof}
		Fix any $u \in \LIP_{bs}(\X_\infty)$, and by ii) in Theorem \ref{thm:mosco} take a sequence $u_n \in W^{1,2}(\X_n)$ such that $u_n$ converges in $W^{1,2}$-strong to $u$. In particular, we can estimate by assumptions
	\[
		\limsup_{n\to \infty}\|u_n\|_{L^q(\mm_n)}
		\le \limsup_{n\to \infty} A \|\nabla u_n\|_{L^2(\mm_n)} = A \|\nabla u\|_{L^2(\mm_\infty)}<\infty.
	\]
	Therefore, by weak lower semicontinuity of the $L^q$-norm (in this setting, see \cite{GigliMondinoSavare13,AmbrosioHonda17}) we get the conclusion of the proof, also taking into account the arbitrariness of $u \in \LIP_{bs}(\X_\infty)$.
\end{proof}
\subsection{Fine P\'olya-Szeg\H{o} inequality and applications}\label{sec:Polya}
The second important ingredient for our analysis is a fine P\'olya-Szeg\H{o} rearrangement theory on non-smooth spaces. 

The genesis of rearrangement methods for functions dates back to the works of Faber \cite{Faber23} and Krahn \cite{Krahn25,Krahn26} and to the book \cite{PoliaSzego51} from which the so-called P\'olya-Szeg\H{o} inequality originated (see also the modern books \cite{LiebLoss1997,Baernstein19,ciao}, the survey \cite{Frank2022} and references therein). To study the case of equality in the P\'olya-Szeg\H{o} inequality, a big step has been made by Brothers-Ziemer  \cite{BrothersZiemer88} (see also \cite{FeroneVolpicelli03}). This fine investigation culminated in \cite{CianchiFusco02} where this analysis was extended to the case of functions of bounded variation. 

In metric setting, these principles were exported in \cite{MondinoSemola20} on non-smooth spaces having positive Ricci curvature lower bounds (extending previous work of Berard-Meyer \cite{BerardMeier1982}) and in \cite{NobiliViolo21,NobiliViolo24} with the primary focus on non-negative Ricci curvature lower bounds. Here, we shall follow the general analysis conducted in \cite{NobiliViolo24_PZ} to get finer rigidity results that are essential in this note.

\medskip

Let us consider a metric measure space $\Xdm$ and $u  \colon \X \to \R$ Borel satisfying
\[
\mu(t)\coloneqq \mm(\{u>t\})<\infty,\qquad \forall \, t>\essinf u.
\]
For $N>1$, consider the one-dimensional weighted measure on the half-line
\[
\mm_{0,N} \coloneqq   N\omega_N t^{N-1}\d t\mres{(0,\infty)},
\]
with $\omega_N\coloneqq\frac{\pi^{N/2}}{\Gamma\left(N/2+1\right)}$. If $N\in \N$ then $\omega_N$ is the volume of the unit ball in $\rr^N$ as already introduced. We can define the decreasing rearrangement of $u$ with respect to $\mm_{0,N} $ as the function
\[
(0,\infty)\ni x \mapsto u^*(x) \coloneqq \inf\{ t >\essinf u \, \colon \, \mu(t)< \mm_{0,N}((0,x)) \}.
\]
Observe that, by definition, $u$ and $u^*$ are equi-measurable and $u^*$ is monotone non-increasing, left-continuous and finitely valued. Hence, $u^*$ is a.e.\ differentiable in $(0,\infty)$. Moreover, $u^*$ is independent of the representative of $u$ up to $\mm$-a.e.\ equality. Our main result in this part is reported (in a simplified form) from \cite[Theorem 1.3]{NobiliViolo24_PZ}.
\begin{theorem}\label{thm:polya noncompact}
 Let $\Xdm$ be a ${\sf RCD}(0,N)$ space, with $ N \in (1,\infty)$ and such that ${\sf AVR }(\X)>0$. Fix also $p \in (1,\infty)$. Then, for every $u \in W^{1,p}_{loc}(\X)$ with $\mm(\{ u>t \})<\infty$ for all $t >\essinf u$, it holds
    \begin{equation}
        \int |\nabla u|^p\,\d \mm \ge  {\sf AVR}(\X)^{\frac pN} \int_0^{+\infty} |(u^*)'|^p\, \d \mm_{0,N}, \label{eq:polya Sobolev AVR}
    \end{equation}
 meaning that, when the left-hand side is finite, then $u^*$ is locally absolutely continuous and the above holds.

Moreover, if equality occurs in \eqref{eq:polya Sobolev AVR} { for some  $0\neq u \in W^{1,p}_{loc}(\X)$ with $\mm(\{ u>t \})<\infty$ for all $t >\essinf u$} and with both sides non-zero and finite, then $\X$ is an  $N$-Euclidean cone. Finally, if also $(u^*)'\neq 0$ a.e.\ on $\{\essinf u< u^* < \esssup u\}$ then $u$ is radial, i.e.\ for some tip $z_0\in \X$ it holds
\[
    u = u^* \circ\big( {\sf AVR}(\X)^{\frac 1N} \sfd(\cdot,z_0)\big),\qquad  \mm\text{-a.e.}.
\]
\end{theorem}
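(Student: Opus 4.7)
The plan is to execute the classical P\'olya-Szeg\H{o} strategy in the non-smooth setting: combine the sharp isoperimetric inequality on ${\sf RCD}(0,N)$ spaces with positive $\avr$, the coarea formula, and H\"older's inequality on level sets, then read the resulting one-dimensional bound in terms of the rearrangement $u^*$. First I would invoke the sharp Euclidean-type isoperimetric inequality known to hold on any ${\sf RCD}(0,N)$ space with $\avr(\X)>0$, namely
\[
\Per(E)\ge N\omega_N^{1/N}\,\avr(\X)^{1/N}\,\mm(E)^{(N-1)/N},\qquad \text{for every Borel }E\subseteq \X,\ \mm(E)<\infty,
\]
together with its rigidity, which forces $\X$ to be an $N$-Euclidean cone (and $E$ a metric ball centered at a tip) whenever equality holds on some set with $0<\mm(E)<\infty$.

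Assuming first that $u\in\LIP_{bs}(\X)$, set $\mu(t):=\mm(\{u>t\})$ and use the coarea formula in the two forms
\[
\int|\nabla u|^p\,\d\mm=\int\!\!\int_{\{u=t\}}|\nabla u|^{p-1}\,\d\Hausph{N-1}\,\d t,\qquad -\mu'(t)=\int_{\{u=t\}}|\nabla u|^{-1}\,\d\Hausph{N-1}.
\]
H\"older with exponents $p$ and $p/(p-1)$ applied to the pair $|\nabla u|^{(p-1)/p}\cdot|\nabla u|^{-(p-1)/p}$ on each $\{u=t\}$ gives $\Per(\{u>t\})^p\le\bigl(\int|\nabla u|^{p-1}\,\d\Hausph{N-1}\bigr)(-\mu'(t))^{p-1}$, and inserting the isoperimetric inequality yields
\[
\int|\nabla u|^p\,\d\mm\ge\int\frac{\Per(\{u>t\})^p}{(-\mu'(t))^{p-1}}\,\d t\ge \avr(\X)^{p/N}\int\frac{(N\omega_N^{1/N}\mu(t)^{(N-1)/N})^p}{(-\mu'(t))^{p-1}}\,\d t.
\]
The substitution $r(t):=(\mu(t)/\omega_N)^{1/N}$, which is the inverse of $r\mapsto u^*(r)$, converts this last integral into $\int_0^\infty|(u^*)'|^p\,\d\mm_{0,N}$ by a direct computation using $(u^*)'(r(t))\,r'(t)=1$ and $r'(t)=\mu'(t)/(N\omega_N r(t)^{N-1})$. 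The passage from $\LIP_{bs}$ to $u\in W^{1,p}_{loc}(\X)$ is by truncation and Lipschitz approximation in the Cheeger energy, while the absolute continuity of $u^*$ when the left-hand side of \eqref{eq:polya Sobolev AVR} is finite is extracted from the chain of inequalities above following the real-variable arguments of \cite{CianchiFusco02} transported to the metric setting.

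For the rigidity, equality in \eqref{eq:polya Sobolev AVR} forces, for a.e.\ $t\in(\essinf u,\esssup u)$, simultaneously (i) equality in the isoperimetric inequality for $\{u>t\}$, and (ii) equality in H\"older on $\{u=t\}$, the latter giving $|\nabla u|$ constant on the level set. Clause (i) at a single $t$ with $0<\mu(t)<\infty$ triggers, by the isoperimetric rigidity on ${\sf RCD}(0,N)$ spaces with positive $\avr$, that $\X$ is an $N$-Euclidean cone and every such $\{u>t\}$ is a metric ball centered at a tip $z_0$. If additionally $(u^*)'\neq 0$ a.e.\ on $\{\essinf u<u^*<\esssup u\}$, then these balls form a nested concentric family $B_{\rho(t)}(z_0)$ and, by equi-measurability together with the cone volume formula $\mm(B_\rho(z_0))=\avr(\X)\omega_N\rho^N$, one finds $\omega_N r(t)^N=\mu(t)=\avr(\X)\omega_N\rho(t)^N$, i.e.\ $r(t)=\avr(\X)^{1/N}\rho(t)$. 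Since $u=t$ on $\partial B_{\rho(t)}(z_0)$ and $u^*(r(t))=t$, this unambiguously gives $u=u^*\circ(\avr(\X)^{1/N}\sfd(\cdot,z_0))$ $\mm$-a.e. The hard part will be the isoperimetric rigidity itself: that a single saturating superlevel set suffices to force the whole space to be a cone. Once this deep geometric input is available from the non-smooth regularity literature, the rest are calculus-type manipulations.
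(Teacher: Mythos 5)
Your overall blueprint for the inequality --- isoperimetric inequality $\Per(\{u>t\})\ge N\omega_N^{1/N}\avr(\X)^{1/N}\mu(t)^{(N-1)/N}$ plus coarea plus H\"older, followed by the substitution $r(t)=(\mu(t)/\omega_N)^{1/N}$ --- is indeed the standard P\'olya--Szeg\H{o} route, and it matches the paper's description of how the inequality was first obtained in the non-smooth setting. Two caveats on the details, though. First, in a general ${\sf RCD}(0,N)$ space there is no pointwise coarea formula of the form $\int|\nabla u|^p\,\d\mm=\int\int_{\{u=t\}}|\nabla u|^{p-1}\,\d\Hausph{N-1}\,\d t$ nor the identity $-\mu'(t)=\int_{\{u=t\}}|\nabla u|^{-1}\,\d\Hausph{N-1}$; the actual derivation must run through the $BV$/finite-perimeter coarea formula $\int_{\{s<u<t\}}|\nabla u|\,\d\mm=\int_s^t\Per(\{u>\tau\})\,\d\tau$ and perimeter-theoretic manipulations, which is precisely why the paper cites the metric-space $BV$ theory \cite{Miranda03,AmbrosioDiMarino14}. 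Second, your ``Lipschitz approximation in the Cheeger energy'' passage silently requires showing that the rearranged functionals on $((0,\infty),\mm_{0,N})$ are lower semicontinuous along the approximating sequence; this is doable but is not a free consequence of the definition of $\rmCh_p$.

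The genuine gap is in the rigidity. Your argument derives conclusions (i) equality in the isoperimetric inequality for a.e.\ level set and (ii) equality in H\"older on $\{u=t\}$ (giving $|\nabla u|$ constant on level sets) from the coarea--H\"older chain, but that chain was established only for Lipschitz $u$. For a general $u\in W^{1,p}_{loc}(\X)$ your proof of the inequality proceeds by Lipschitz approximation and passing to the limit, and equality in the limit does \emph{not} force equality along the approximants, so (i)--(ii) cannot be read off. This is exactly the issue the paper flags as the reason the radiality conclusion in the predecessor work \cite{NobiliViolo24} required both that $u$ be Lipschitz (in the sense that $|\nabla u|$ is tracked pointwise along level sets) and that $\mm(\{|\nabla u|=0\}\cap\{\essinf u<u<\esssup u\})=0$. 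The statement you are asked to prove is strictly stronger: it claims the radiality conclusion under only $(u^*)'\ne 0$ a.e.\ --- which, as the paper notes via \cite[Prop.\ 3.9]{NobiliViolo24_PZ}, is a genuinely weaker hypothesis --- and with no Lipschitz regularity assumed on $u$. Your H\"older-equality-on-level-sets reasoning never engages with this improvement; it recovers the weaker conclusion of \cite{NobiliViolo24}, not the one stated in Theorem~\ref{thm:polya noncompact}. In short, you correctly identify the isoperimetric rigidity of \cite{CavallettiManini22,AntonelliPasqualettoPozzettaSemola22} as a key input, but the ``hard part'' you defer to it is in fact the part that was already known; what is new here, and what your proposal does not supply, is the measure-theoretic argument that extracts level-set saturation and radiality from equality in \eqref{eq:polya Sobolev AVR} for a non-Lipschitz $u$ assuming only that $(u^*)'$ is a.e.\ nonvanishing.
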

For non-negative functions, \eqref{eq:polya Sobolev AVR}  was already studied in the non-smooth setting in \cite{NobiliViolo21,NobiliViolo24} and previously on manifolds in \cite{BaloghKristaly21}. It was obtained by combination of a well-established isoperimetric inequality in this setting, see \cite{BaloghKristaly21} and previous contributions on manifolds \cite{Brendle20,AgostinianiFogagnoloMazzieri20,FogagnoloMazzieri22}, with rearrangements arguments involving the theory of sets of finite perimeter in metric setting \cite{Miranda03,AmbrosioDiMarino14} (see also \cite{Martio16}, shown to be equivalent in \cite{DCEBKS19,NobiliPasqualettoSchultz21}, we also refer to \cite{BrenaNobiliPasqualetto22} for a metric-valued theory).

Let us focus on the second part of the statement concerning equality cases. In \cite{NobiliViolo24}, we also dealt with equality cases relying on the rigidity part of the underlying isoperimetric inequality \cite{AntonelliPasqualettoPozzettaSemola22,CavallettiManini22} and on the boundedness regularity properties of isoperimetric sets \cite{AntonelliPasqualettoPozzetta22,APPV23}. However, since  \eqref{eq:polya Sobolev AVR} was deduced by approximation via Lipschitz functions satisfying $|\nabla u|\neq 0$ $\mm$-a.e.\ on $\{\essinf u<u<\esssup u\}$, the radiality conclusion could only be deduced under these assumptions. The main novelty in the above is that the same conclusion holds under $(u^*)'\neq 0$ a.e.\ in $\{\essinf u< u^*<\esssup u\}$ (and completely dropping the Lipschitz regularity of $u$). It is well known (see, e.g., \cite[Proposition 3.9]{NobiliViolo24_PZ}) that the latter is {apriori} a \emph{weaker} assumption 
\begin{align*}
    \mm( \{ |\nabla u|=0\} \cap &\{ \essinf u<u<\esssup u\})=0  \\
    &\implies \, \mm_{0,N}( \{ (u^*)'=0\} \cap \{ \essinf u<u^*<\esssup u\})=0,
\end{align*}
{and that it can be strictly weaker (see \cite[Example 3.6]{CianchiFusco02}).} Therefore, Theorem \ref{thm:polya noncompact} improves the corresponding statement in \cite[Theorem 3.4]{NobiliViolo24}. {However, it is also relevant to say that, when equality occurs in \eqref{eq:polya Sobolev AVR} for some non-zero function, the two conditions turn out to be equivalent (see, e.g., \cite[Proposition 3.9]{NobiliViolo24_PZ}).}

A straightforward implication is then Theorem \ref{thm:rigidity sob main avr}, also in non-smooth settings (here, $S_{N,p}$ reads exactly as in \eqref{eq:Sobolev constant} for possibly non-integer dimension).
\begin{theorem}\label{thm:sob avr rcd}
    Let $p\in(1,\infty)$ and let $\Xdm$ be a ${\sf RCD}(0,N)$ space for some $ N \in (p,\infty)$ and with ${\sf AVR }(\X)>0$. Set $p^*\coloneqq pN/(N-p)$. Then, for every $u \in W^{1,p}_{loc}(\X)\cap L^{p^*}(\X)$, it holds sharp
    \begin{equation}
        \|u\|_{L^{p*}(\mm)} \le {\sf AVR}(\X)^{-\frac 1N} S_{N,p}\|\nabla u\|_{L^p(\mm)}.\label{eq:sob avr RCD}
    \end{equation}
    Moreover, if equality holds for some $0\neq u \in W^{1,p}_{loc}(\X)\cap L^{p^*}(\X)$ with both sides finite, then
    \begin{itemize}
        \item[{\rm i)}] $\X$ is an $N$-Euclidean cone;
        \item[{\rm ii)}] there are $a\in \R,b>0$ and a tip $z_0 \in \X$ so that
        \[
        u = \frac{a}{(1+b\sfd(\cdot,z_0)^{\frac{p}{p-1}})^\frac{N-p}{p}},\qquad \mm\text{-a.e..}
        \]
    \end{itemize}
\end{theorem}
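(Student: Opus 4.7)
The plan is to deduce the inequality and its rigidity by combining the fine P\'olya--Szeg\H{o} inequality of Theorem \ref{thm:polya noncompact} with the sharp one-dimensional weighted Sobolev inequality (the Bliss--Talenti inequality) on the model half-line $((0,\infty),\mm_{0,N})$.

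First I would reduce to $u\ge 0$: since $\||u|\|_{L^{p^*}(\mm)}=\|u\|_{L^{p^*}(\mm)}$ and $|\nabla |u||\le|\nabla u|$ $\mm$-a.e.\ (standard in the metric Sobolev calculus), replacing $u$ by $|u|$ only improves the putative inequality. Then, since $u\in L^{p^*}(\mm)$, Chebyshev gives $\mm(\{u>t\})\le t^{-p^*}\|u\|_{L^{p^*}(\mm)}^{p^*}<\infty$ for every $t>0=\essinf u$, so the decreasing rearrangement $u^*$ is well defined and equimeasurable, yielding $\|u\|_{L^{p^*}(\mm)}=\|u^*\|_{L^{p^*}(\mm_{0,N})}$. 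Applying Theorem \ref{thm:polya noncompact} gives
\[
\int |\nabla u|^p\,\d\mm\ge {\sf AVR}(\X)^{\frac{p}{N}}\int_0^{+\infty}|(u^*)'|^p\,\d\mm_{0,N}.
\]

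Next, I would invoke the sharp Bliss--Talenti one-dimensional weighted Sobolev inequality: for any locally absolutely continuous non-increasing $f\colon(0,\infty)\to[0,\infty)$ with $f\in L^{p^*}(\mm_{0,N})$ and $f'\in L^p(\mm_{0,N})$,
\[
\|f\|_{L^{p^*}(\mm_{0,N})}\le S_{N,p}\,\|f'\|_{L^p(\mm_{0,N})},
\]
with equality precisely for the Aubin--Talenti profiles $f(r)=a(1+b r^{p/(p-1)})^{-(N-p)/p}$, $a,b>0$. The constant $S_{N,p}$ from \eqref{eq:Sobolev constant} is designed exactly so that this one-dimensional statement holds (in fact, for $N\in\N$ this is just the classical radial Euclidean Sobolev inequality, while the general formula of Bliss extends verbatim to non-integer $N>p$). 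Applying this to $f=u^*$ and chaining the two inequalities together with equimeasurability yields \eqref{eq:sob avr RCD}.

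For the rigidity part, suppose equality holds in \eqref{eq:sob avr RCD} with both sides non-zero and finite. Then equality must hold simultaneously in the P\'olya--Szeg\H{o} step and in the Bliss--Talenti step. The latter forces $u^*$ to be an Aubin--Talenti profile, so in particular $(u^*)'<0$ throughout $(0,\infty)$, which is exactly the non-degeneracy condition $\mm_{0,N}(\{(u^*)'=0\}\cap\{\essinf u<u^*<\esssup u\})=0$. I can then invoke the rigidity part of Theorem \ref{thm:polya noncompact}: $\X$ is an $N$-Euclidean cone, and there is a tip $z_0$ with $u=u^*\circ({\sf AVR}(\X)^{1/N}\sfd(\cdot,z_0))$ $\mm$-a.e. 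Absorbing the factor ${\sf AVR}(\X)^{p/(N(p-1))}$ into the parameter $b$ gives the stated form of $u$.

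The essential difficulty in this argument is not in the above combination, which is now essentially formal, but in Theorem \ref{thm:polya noncompact} itself --- specifically its rigidity half, where the hypothesis is on the rearrangement $(u^*)'\neq 0$ rather than on $|\nabla u|\neq 0$. This is precisely the upgrade achieved in \cite{NobiliViolo24_PZ} and it is indispensable here: the Bliss--Talenti extremals $(u^*)'$ never vanish, but there is no a priori reason why $|\nabla u|$ should be non-zero on the level set region, so the classical Brothers--Ziemer type statement would not apply directly. Once that refined rigidity is available, the proof above goes through in a few lines.
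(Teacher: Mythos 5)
Your proof is correct and follows essentially the same route as the paper: reduce to $|u|$, apply the fine P\'olya--Szeg\H{o} inequality (Theorem \ref{thm:polya noncompact}), chain with the one-dimensional Bliss inequality, and use the characterization of Bliss extremals to verify the non-degeneracy hypothesis $(u^*)'\neq 0$ before invoking the rigidity half of Theorem \ref{thm:polya noncompact}. The only omission is the sharpness of the constant in \eqref{eq:sob avr RCD}, which is part of the statement and which the paper handles by a short separate argument (testing with Euclidean bubbles, cf.\ Remark \ref{rem:sharp}).
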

\begin{proof}
        We first show that \eqref{eq:sob avr RCD} holds. Consider any $u \in W^{1,p}_{loc}(\X)\cap L^{p^*}(\mm)$. If $u=0$ or $\|\nabla u\|_{L^p(\mm)}=\infty$, there is nothing to prove. Otherwise, consider $|u|^*$ the decreasing rearrangement of $|u|$ with respect to  $\mm_{0,N}$ (notice $\mm(|u|>t)<\infty$ for $t>0$, as $u \in L^{p^*}(\mm)$).   Recalling Theorem \ref{thm:polya noncompact}, we have that $|u|^*$ is locally absolutely continuous on $(0,\infty)$, hence we can estimate
        \[
             S_{N,p}{\sf AVR}(\X)^{-\frac 1N}\|\nabla u\|_{L^p(\mm)}\overset{\eqref{eq:polya Sobolev AVR}}{\ge}  S_{N,p} \|(|u|^*)'\|_{L^{p^*}(\mm_{0,N})} \ge    \||u|^*\|_{L^{p^*}(\mm_{0,N})} =\|u\|_{L^{p^*}(\mm_{0,N})} ,
        \]
        having used that $|\nabla u|=|\nabla |u||$ $\mm$-a.e.\ by the chain rule, Bliss inequality \cite{Bliss30} in the second inequality and equimeasurability in the last equality. This proves \eqref{eq:sob avr RCD}.

        Assume now that equality holds in \eqref{eq:sob avr RCD} for some function $0\neq u \in W^{1,p}_{loc}(\X)\cap L^{p^*}(\mm)$ with both sides non-zero and finite. Then, in the above computation, we have that all inequalities must be equalities. By the characterizations of extremal functions in the Bliss inequality, we then get
    \[
     |u|^*(t) = a\big(1+b t^{\frac p{p-1}}\big)^{\frac{p-N}{p}},\qquad\forall \, t>0,
    \]
    for some $a\in \R,b>0$. In particular, it holds $(|u|^*)'\neq 0$ a.e.\ on $(0,\infty)$. Therefore, the rigidity part in Theorem \ref{thm:polya noncompact} gives that $\X$ is an $N$-Euclidean cone and $|u|= |u|^*\circ \big({\sf AVR}(\X)^{1/N} \sfd(\cdot,z_0)\big)$ holds for some tip $z_0\in\X$. Since $|u|^*$ is strictly positive,  the claim for $u$ follows up to possibly inverting sign.

    Finally, the sharpness of \eqref{eq:sob avr RCD} follows {by a contradiction and limiting argument (see Remark \ref{rem:sharp} below for the relevant references). Indeed, suppose that there is $A< {\sf AVR}(\X)^{-\frac 1N}S_{N,p}$ so that 
    \begin{equation}
        \|u\|_{L^{p^*}(\mm)} \le A \|\nabla u\|_{L^p(\mm)},
    \label{eq:sob c fake}
    \end{equation}
    holds (i.e.\ there is a better Sobolev constant in \eqref{eq:sob avr RCD}). Consider now $x\in\X$ and $\sigma_n \downarrow 0$ and set
    \[
    (Y_n,\rho_n,\mu_n,y_n) \coloneqq (\X,\sigma_n \cdot \sfd, \sigma_n^N \cdot \mm,x),\qquad \forall n \in \N.
    \]
    For all $r>0$, we know by scaling that $\frac{\mu_n(B_r(y_n))}{\omega_N r^N} = \frac{\sigma_n^N \mm (B_{r/\sigma_n}(x))}{\omega_N r^N} = \frac{ \mm (B_{r/\sigma_n}(x))}{\omega_N (r/\sigma_n)^{N}}$. Therefore, the sequence of pointed metric measure space $(Y_n,\rho_n,\mu_n,y_n) $ satisfies the ${\sf RCD}(0,N)$ condition and, by Bishop-Gromov monotonicity, we deduce
    \[
    \mu_n(B_1(y_n)) \in ( {\sf AVR}(\X), \omega_N \mm(B_1(x))),
    \]
    for all $n$ sufficiently large. In particular, Theorem \ref{ref:precompact RCD} applies and, up to a subsequence, we get the existence of a pointed ${\sf RCD}(0,N)$ metric measure space $(Y,\rho,\mu,z)$ so that $Y_n \overset{pmGH}{\to} Y$ as $n\uparrow\infty$. Since it holds that $\mu_n(B_r(y_n)) \to \mu(B_r(z))$ (see e.g.\ \cite[Eq. (2.4)]{DePhilippisGigli18}) and, by scaling, that $\mu_n(B_r(y_n)) /\omega_Nr^N \to {\sf AVR}(\X)$ as $n\uparrow\infty$, we get in particular the identities
    \[
    {\sf AVR}(\X) = \frac{\mu(B_r(z))}{\omega_Nr^N},\quad \forall r>0,\qquad \text{hence also}\qquad {\sf AVR}(\X)={\sf AVR}(Y).
    \]
    In particular, by the volume-cone-to-metric-cone theorem \cite{DePhilippisGigli15} we deduce that $Y$ is an $N$-Euclidean cone and $z \in Y$ is a tip (the space $Y$ obtained is usually called an asymptotic cone or blow down of $\X$ at $x$). Here comes the main point. On the $N$-Euclidean cone $Y$, the bubble $u = (1+\rho(\cdot,z)^{\frac{p}{p-1}})^{\frac{p-N}{p}}$ is a minimizer for the Sobolev inequality \eqref{eq:sob avr RCD}. This easily follows using the characterization \cite{Bliss30} and by polar coordinates around the tip $z$ (see, e.g., \cite[Lemma 4.2]{NobiliViolo21}). Hence we have
    \[
        \frac{\|u\|_{L^{p^*}(\mu)}}{\|\nabla u\|_{L^p(\mu)}} = {\sf AVR}(Y)^{-\frac 1N}S_{N,p}  =  {\sf AVR}(\X)^{-\frac 1N}S_{N,p}.
    \]
    Notice, however, that the sequence of spaces $(Y_n,\rho_n,\mu_n)$ satisfies the same Sobolev inequality as in \eqref{eq:sob c fake} by invariance of the inequality under the chosen scalings. Therefore, the stability of Sobolev constants (c.f.\ \cite[Lemma 4.1]{NobiliViolo21}) yields
    \[
        \frac{\|u\|_{L^{p^*}(\mu)}}{\|\nabla u\|_{L^p(\mu)}} \le A.
    \]
    We thus found a contradiction with the absurd hypothesis $A< {\sf AVR}(\X)^{-\frac 1N}S_{N,p}$.}
\end{proof}
We mention that the very same arguments employed in the above proof works also for different functional inequalities, see for instance \cite[Theorem 1.6]{NobiliViolo24_PZ} and reference there for what concerns the Faberk-Krahn inequality and the log-Sobolev inequality in this setting.
\begin{remark}\label{rem:sharp}
{The argument to deduce sharpness of \eqref{eq:sob avr RCD} have its roots in the works \cite{Ledoux99,Xia01}} where it was understood that the validity of a Sobolev inequality as in \eqref{eq:sob c fake} under non-negative Ricci curvature gives, in turn, the non-collapsing property
\[
{\sf AVR}(\X) \ge \left( \frac{S_{N,p}}{A} \right)^N.
\]
We considerably shorten the proof of this result via asymptotic analysis and non-smooth techniques in \cite[Theorem 4.6]{NobiliViolo21}. {The argument was presented in the above proof as it implies sharpeness of \eqref{eq:sob avr RCD}}. Finally, we also refer to the recent work \cite{Kristaly23} containing the shortest and most elementary argument available by now (see \cite[Proposition 2.3]{Kristaly23}).  
\end{remark}

\begin{remark}\label{rmk:regularity PDE}
\rm As already discussed, for $p=2$ the results i),ii) in Theorem \ref{thm:sob avr rcd} were already obtained in \cite{NobiliViolo24} relying on a weaker rigidity result of the P\'olya-Szeg\H{o} inequality. Notice, in the above proof, that the property 
\[
\mm_{0,N}( \{ (u^*)'=0\} \cap \{ \essinf u<u^*<\esssup u\})=0,
\]
comes for free, thanks to the absence of critical points of the extremal functions in the Bliss inequality. Instead, in \cite[Theorem 5.3]{NobiliViolo24} the property $\mm( \{ |\nabla u|=0\} \cap \{ \essinf u<u<\esssup u\})=0$ had to be checked, together with the fact that equality in \eqref{eq:sob avr RCD} holds for Lipschitz functions. Both pieces of information were extracted from elliptic regularity theory thanks to the fact that an extremal function in the Sobolev inequality is a signed distributional solution of
\[
S_{N,2}^2{\sf AVR}(\X)^{-\frac 2N}\Delta u +u|u|^{2^*-2}=0,
\]
for an appropriate notion of Laplacian in this setting (see \cite{Gigli12}). The above has a non-linear analogue for $p\neq 2$, but the arguments used in \cite[Theorem 5.3]{NobiliViolo24} would not directly extend in this case.  \fr
\end{remark}

\subsection{Concentration compactness on varying spaces}
Here we present the last important ingredient, namely a generalized concentration compactness principle for extremizing sequences of Sobolev inequalities with varying base space, revisiting the works of Lions \cite{Lions84,Lions85}. As already remarked, we focus only on the setting of Theorem \ref{thm:qualitative SobAVR intro} for simplicity, as it contains all the ideas that, suitably generalized, work also for Theorem \ref{thm:into main compact}. We therefore report here \cite[Theorem 6.2]{NobiliViolo24} in simplified form.
\begin{theorem}\label{thm:CC_Sobextremals}
   For every $N\in (2,\infty)$,  there exists $\eta_N\in(0,1/2)$ such that the following holds. Let $(Y_n,\rho_n,\mu_n,y_n)$ be pointed ${\sf RCD}(0,N)$ spaces for some $N\in (2,\infty)$ and for all $n\in\N$. Suppose that $\sup_n \mu_n(B_1(y_n))<+\infty$ and, for some $A_n\to A >0$ and $2^*=2N/(N-2)$, that it holds
   \begin{equation}
   \|u\|_{L^{2^*}(\mu_n)} \le A_n\|\nabla u\|_{L^2(\mu_n)},\qquad \forall u \in W^{1,2}(Y_n).\label{eq:convention}
   \end{equation}
Furthermore, suppose there are non-constant functions $u_n \in W^{1,2}(Y_n)$ with $\| u_n\|_{L^{2^*}(\mu_n)} =1$  and
\begin{align}
&\sup_{y \in Y_n} \int_{B_1(y)}|u_n|^{2^*}\,\d\mu_n=\int_{B_1(y_n)}|u_n|^{2^*}\,\d\mu_n = 1-\eta,\label{eq:Levyscalings} \\
 &\| u_n\|_{L^{2^*}(\mu_n)} \ge  \tilde A_n \|\nabla u_n\|_{L^2(\mu_n)},\label{eq:extremals}
\end{align}
for  some $\tilde A_n \to A$  and  $\eta \in(0,\eta_N).$ Then, up to a subsequence, we have:
    \begin{itemize}
    \item[ \rm i)]
    there is a pointed $\RCD(0,N)$-space $(Y,\rho,\mu,y)$ so that 
    \[
    Y_n\overset{pmGH}{\to} Y,
    \]
    and it holds
    \begin{equation}
       \|u\|_{L^{2^*}(\mu)} \le A\|\nabla u\|_{L^2(\mu)},\qquad \forall u \in W^{1,2}(Y);\label{eq:sob limit Y}
       \end{equation}
    \item[ \rm ii)]  $u_n$ converges  $L^{2^*}$-strong to some $0\neq u_\infty \in W^{1,2}_{loc}(Y)$ with $|\nabla u_\infty| \in L^2(\mu)$ and
    \[
       \int |\nabla u_n|^2\, \d \mu_n \to  \int |\nabla u_\infty|^2\,\d\mu, \qquad  \text{as }n\uparrow\infty;
    \]
    \item[ \rm iii)] it holds
    \[ \| u_\infty\|_{L^{2^*}(\mu)} =  A \|\nabla u_\infty\|_{L^2(\mu)}.\]

\end{itemize}
\end{theorem}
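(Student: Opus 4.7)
To prove the theorem, I plan to run a Lions-style concentration-compactness analysis in the pmGH-varying setting, combining the stability of the ${\sf RCD}$-condition with careful accounting of defect measures at the critical Sobolev exponent. First I would apply Theorem \ref{ref:precompact RCD} to $(Y_n,\rho_n,\mu_n,y_n)$: the assumed upper bound on $\mu_n(B_1(y_n))$, together with a matching lower bound that one can extract from \eqref{eq:Levyscalings} and \eqref{eq:convention} (the $1-\eta$ fraction of $|u_n|^{2^*}\mu_n$-mass inside $B_1(y_n)$ combined with the uniform Sobolev inequality prevents the volume from degenerating), gives a subsequence with $Y_n \overset{pmGH}{\to} Y$ for a pointed ${\sf RCD}(0,N)$ space $(Y,\rho,\mu,y)$, realized in some ambient $(Z,\sfd)$. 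Lemma \ref{lem:sobolev stability} then transfers \eqref{eq:convention} to $Y$ with constant $A$, giving (i).

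Next, by \eqref{eq:extremals} the energies $\|\nabla u_n\|_{L^2(\mu_n)}$ converge to $1/A$, so the sequence is uniformly $W^{1,2}$-bounded. Using Theorem \ref{thm:mosco}, pass to a subsequence so that $u_n$ converges $W^{1,2}$-weak (in the pmGH sense) and $L^{2^*}$-weak to some $u_\infty \in W^{1,2}_{loc}(Y)$ with $|\nabla u_\infty| \in L^2(\mu)$. On every bounded region the Rellich--Kondrachov compactness available on ${\sf RCD}(0,N)$-spaces (uniform volume doubling is present) improves this to $L^q_{loc}$-strong convergence for every $q<2^*$.

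The heart of the proof is the concentration-compactness decomposition at the critical exponent. Extract further weak-$*$ limits of non-negative Radon measures on $Z$:
\begin{equation*}
|u_n|^{2^*}\mu_n \weakto |u_\infty|^{2^*}\mu + \sigma, \qquad |\nabla u_n|^2\mu_n \weakto |\nabla u_\infty|^2\mu + \tau,
\end{equation*}
where the subcritical strong convergence forces $\sigma = \sum_i s_i \delta_{x_i}$ to be purely atomic, and plugging Lipschitz cutoffs supported in shrinking neighborhoods of each $x_i$ into the limit Sobolev inequality on $Y$ yields the atom-by-atom bound $\tau(\{x_i\}) \ge s_i^{2/2^*}/A^2$. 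Portmanteau, applied to \eqref{eq:Levyscalings}, passes the upper bound on ball-masses to the limit, giving $(|u_\infty|^{2^*}\mu + \sigma)(B_1(y^*)) \le 1-\eta$ for every $y^* \in Y$; the same concentration condition forces the total mass of the limit measure to equal $1$ (no escape to infinity).

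Setting $\alpha := \int |u_\infty|^{2^*}\,\d\mu$ and $\beta := \int |\nabla u_\infty|^2\,\d\mu$, the mass identity $\alpha + \sum_i s_i = 1$, the energy budget $A^2\beta + \sum_i s_i^{2/2^*} \le 1$, and the Sobolev inequality $\alpha^{2/2^*} \le A^2\beta$ on $Y$ combine into $\alpha^{2/2^*} + \sum_i s_i^{2/2^*} \le 1 = \alpha + \sum_i s_i$. Since $x^{2/2^*} \ge x$ on $[0,1]$ with equality only at $0$ and $1$, each of $\alpha$ and the $s_i$ lies in $\{0,1\}$, and exactly one of them equals $1$. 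A single atom $s_{i_0} = 1$ at $x_{i_0}$ would give $(\sigma + |u_\infty|^{2^*}\mu)(B_1(x_{i_0})) \ge 1 > 1-\eta$, contradicting the Portmanteau bound; hence $\alpha = 1$, $\sigma = 0$, $\tau = 0$, which delivers the $L^{2^*}$-strong convergence and gradient convergence in (ii) and the Sobolev saturation (iii). The main obstacle is the criticality of $2^*$: Rellich fails, defect measures can form, and the role of $\eta \in (0,\eta_N)$ with $\eta_N$ suitably small is precisely to simultaneously exclude vanishing, dichotomy, and perfect atomic bubbling, so that the $0/1$ dichotomy above concludes with $\alpha=1$.
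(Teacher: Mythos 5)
Your proposal follows the same overall concentration--compactness blueprint as the paper (pmGH precompactness and Sobolev stability for (i); defect measures plus strict concavity of $t\mapsto t^{2/2^*}$ for (ii)--(iii)), but there is a genuine gap at the crucial tightness step, and it is precisely the step the paper singles out as the hardest.

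You write that ``the same concentration condition forces the total mass of the limit measure to equal $1$ (no escape to infinity).'' This does not follow from \eqref{eq:Levyscalings}. That assumption only normalizes the L\'evy concentration function at scale $1$ to the value $1-\eta$; it rules out the \emph{Vanishing} scenario (where $\sup_z\nu_n(B_R(z))\to 0$ for all $R$), but it is entirely compatible with \emph{Dichotomy}, in which, say, a fraction $\lambda\in(1-\eta,1)$ of the $\nu_n$-mass concentrates near $y_n$ while the complementary fraction $1-\lambda$ escapes to infinity. If mass escapes, the decomposition $\nu_n=|u_n|^{2^*}\mu_n\rightharpoonup|u_\infty|^{2^*}\mu+\sigma$ only captures the non-escaping part, and your key identity $\alpha+\sum_i s_i=1$ fails (one only has $\le 1$). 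With a strict inequality, the concavity chain $\alpha^{2/2^*}+\sum_i s_i^{2/2^*}\le 1$, $x^{2/2^*}\ge x$ on $[0,1]$ no longer pins down a $0/1$ dichotomy, and the argument stalls.

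What is missing is the Dichotomy-exclusion step, and it cannot be dispensed with: the input there is the \emph{extremality} condition \eqref{eq:extremals}, not \eqref{eq:Levyscalings}. The paper runs a Lipschitz cut-off splitting of $u_n$ into two pieces supported in far-separated regions and shows, using the near-saturation $\|u_n\|_{L^{2^*}}\ge\tilde A_n\|\nabla u_n\|_{L^2}$ together with the strict concavity of $t\mapsto t^{2/2^*}$ (and, for the energy, near-additivity across the cut), that splitting a fixed fraction $\lambda\in(0,1)$ of the $L^{2^*}$-mass into separated bumps strictly increases the Sobolev ratio, contradicting extremality; this is carried out \emph{before} any passage to the limit measure. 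Only once Vanishing and Dichotomy have been excluded does one know $\nu_n\rightharpoonup\nu$ in $C_b(Z)$ with $\nu(Z)=1$, after which your atom analysis and the Portmanteau bound $\nu_j\le 1-\eta$ correctly finish (ii) and (iii). In short: you use strict concavity only in the final atom-killing step, whereas the paper needs it twice, first to establish tightness at the level of the Dichotomy trichotomy, and then again to eliminate atoms. Your concluding remark acknowledges that dichotomy must be excluded but does not supply the argument, and the assertion that \eqref{eq:Levyscalings} alone delivers tightness is incorrect.

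A minor additional remark on (i): the lower volume bound on $\mu_n(B_1(y_n))$ does not require \eqref{eq:Levyscalings} at all; as in the paper, plugging a $1$-Lipschitz cut-off adapted to $B_1(y_n)$ into \eqref{eq:convention} and using Bishop--Gromov doubling already gives $\mu_n(B_1(y_n))^{1/2-1/2^*}\ge c(N)A_n$, whence the lower bound.
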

Next, we briefly comment on its proof while referring again to \cite[Theorem 6.2]{NobiliViolo24} for all the details. Before continuing, notice that the assumption
\[
\sup_n \mu_n(B_1(y_n))<\infty,
\]
is about the non-smooth setting. Indeed, if $Y_n$ is a sequence of manifolds and $\rho_n,\mu_n$ are respectively the geodesic distance and the volume measure, this is automatically true by Bishop-Gromov monotonicity.
\begin{proof}[Proof of i)]
    The first important key step is to show that the sequence $(Y_n,\rho_n,\mu_n,y_n)$ is pmGH-precompact thanks to the application of Gromov precompactness theorem in this setting, recall Theorem \ref{ref:precompact RCD}. To this aim, we need to check that
\[
\mu_n(B_1(y_n)) \in (v^{-1},v),\qquad \text{for some }v>0\text{ and for all }n\in\N.
\]
The upper bound is directly given by the assumptions. For the lower bound, it suffices to plug in \eqref{eq:convention} the sequence of $1$-Lipschitz cut-offs $\varphi_n \in \Lip(Y_n)$ so that $\varphi_n$ takes value in $[0,1]$,  $\varphi_n \equiv 1$ on $B_1(y_n)$ and $\varphi_n(x)=0$ on $B_2(y_n)^c$. Indeed, this gives
\[
    \mu_n(B_1(y_n))^{\frac{1}{2^*}} \le A_n \mu_n(B_2(y_n))^{\frac 12} \le C_N A_n \mu_n(B_1(y_n))^{\frac 12},
\]
where $C_N>0$ depends on the doubling constant of the ${\sf RCD}(0,N)$ class which is uniform along the sequence of spaces $Y_n$ { (see \cite[Corollary 2.4]{Sturm06II})}. In particular, rearranging the above gives
\[
\mu_n(B_{1}(y_n))^{\frac 12 - \frac{1}{2^*}}  \ge C_NA_n,
\]
which precisely implies the sought lower bound, as $2^*>2$ and $A_n \to A$ by assumptions. Therefore, by Gromov precompactness result in  Theorem \ref{ref:precompact RCD}, we have the existence of a pointed  ${\sf RCD}(0,N)$ space $(Y,\rho,\mu,y)$ so that, up to a subsequence, it holds $Y_n \overset{pmGH}{\to} Y$ as $n\uparrow\infty$. Moreover, by stability of Sobolev constants (c.f.\ Lemma \ref{lem:sobolev stability}), the proof of i) is concluded. 
\end{proof}
Up to now, the condition \eqref{eq:extremals} as well as the property \eqref{eq:Levyscalings} were not used at all. Roughly, the former encodes the fact that $(u_n)$ is an extremizing sequence along varying base space, while, the latter, states that precisely at unitary radius/scale around $y_n$, we detect definitely $1-\eta$ mass (recall that $|u_n|^{2^*}\mu_n$ are probabilities) with $1-\eta\neq 0,1$. The combination of the two with the generalized Sobolev calculus discussed in Section \ref{sec:sob X varying}, will put the concentration compactness strategy into work to get results ii) and iii). This is one of the most technical parts of the works \cite{NobiliViolo21,NobiliViolo24}, hence we will not give a rigorous proof. However, we try to convey the key ideas in the following overview.
\begin{proof}[Overwiew of the proof of ii) and iii)]
In this part, let us consider a realization $(\Z,\sfd)$ of the pmGH-convergence $Y_n\overset{pmGH}{\to}Y$ (thanks to result i), now being available). It can be shown that $(\Z,\sfd)$ can be taken proper, i.e.\ bounded and closed sets are compact, see \cite{GigliMondinoSavare13}. We can then consider the sequence of probability measures
\[
\nu_n \coloneqq |u_n|^{2^*}\mu_n.
\]
Here, we shall adopt the so-called \emph{extrinsic approach} (\cite{GigliMondinoSavare13}) and, by suitably pushing forward $\nu_n$ on $\Z$ via the isometric embeddings given by the pmGH-convergence, we shall tacitly consider $\mu_n$ as probability measures on $(\Z,\sfd)$.

Here comes the first main point of the concentration compactness, namely the analysis of all the possible behaviors of the sequence $(\nu_n)$. It turns out (see \cite[Lemma A.6]{NobiliViolo24}) that, up to a subsequence, \underline{one of the following} holds:
\begin{itemize}
    \item {\sc Compactness}. There exists $(z_n) \subset \Z$ such that for all $\eps>0$, there exists $R>0$ satisfying \[ \nu_n(B_R(z_n)) \ge 1-\eps, \qquad \forall n \in \N.\]

    \item {\sc Vanishing}. \[ \lim_{n\to\infty} \sup_{z \in \Z} \nu_n(B_R(z)) =0,\qquad \forall R>0.\]
    
    \item {\sc Dichotomy}. There exists $\lambda \in (0,1)$ with $\lambda \ge \limsup_n  \sup_{z \in \Z} \nu_n(B_R(z))$, for all $R>0,$ so that: there exists $R_n\uparrow \infty$, $(z_n) \subset \Z$ and there are $\nu_n^1,\nu_n^2$ two non-negative Borel measures satisfying
    \[
    \begin{split}
        &0 \le \nu_n^1+\nu_n^2 \le \nu_n, \\
        &\supp( \nu_n^1) \subset B_{R_n}(z_n), \quad \supp( \nu_n^2) \subset \Z \setminus B_{10R_n}(z_n), \\
        & \limsup_{n\to\infty}   \ \big| \lambda -  \nu_n^1( \Z) \big|+ \big| (1-\lambda ) -  \nu_n^2( \Z) \big|  =0 .
    \end{split}
    \]
\end{itemize}
The Compactness scenario exactly means that $(\nu_n)$ is tight, as a sequence of probability measures. The Vanishing scenario means that all the mass escapes outside of every ball and at every fixed scale/radius. The third and more complicated scenario is called instead Dichotomy as it refers to the fact that the mass splits into two separated regions, one located around the points $z_n$, the other escaping. From here, the proof is devoted to showing that the combination of \eqref{eq:Levyscalings} and \eqref{eq:extremals} rules out the Vanishing and Dichotomy scenario. Therefore, compactness will hold with $z_n=y_n$ as given by the assumptions, and results ii) and iii) and the conclusion of the proof will follow. 

\medskip 

Notice that \eqref{eq:Levyscalings} automatically excludes the Vanishing scenario. Hence, it remains to show that Dichotomy does not occur. { We argue in a simplified setting assuming that $\nu_n = \nu^1_n + \nu^2_n$, i.e.\ when all the mass is located in two disjoint regions at positive distance. The general case can be treated similarly by suitably handling remainder terms (see Step 2 in \cite[Theorem 6.2]{NobiliViolo24}). In this case,  one can decompose $u_n = u_n^1+u_n^2 \coloneqq  \phi_n u_n + (1-\phi_n)u_n$ for suitable Lipschitz cut-off functions $\phi_n \colon \Z\to [0,1]$. We thus deduce, using locality and that $(u_n)$ is an optimizing sequence, that
\begin{align*}
    1 = \|u_n\|_{L^{2^*}(\mu_n)}^2 &\overset{\eqref{eq:extremals}}{\ge} \limsup_{n\to\infty} A_n^2 \| \nabla u_n\|_{L^2(\mu_n)}^2 = \limsup_{n\to\infty} A_n^2 \| \nabla u_n^1 \|_{L^2(\mu_n)}^2  + A_n^2 \| \nabla u_n^2 \|_{L^2(\mu_n)}^2 \\
    &\overset{\eqref{eq:convention}}{\ge} \limsup_{n\to\infty} \|u_n^1\|_{L^{2^*}(\mu_n)}^2 + \|u_n^2\|_{L^{2^*}(\mu_n)}^2 \\
    &= \limsup_{n\to\infty} \left( \nu_n^1(\Z)\right)^{2/2^*} + \left(\nu_n^2(\Z)\right)^{2/2^*} \\
    &= \lambda^{2/2^*} + (1-\lambda)^{2/2^*} >1,
\end{align*}
having used, at last, the strict concavity of $t\mapsto t^{2/2^*}$ and that $\lambda \in (0,1)$. This leads to a contradiction, showing that} the Dichotomy scenario does not occur.

Therefore, having excluded all but the Compactness scenario, we get up to subsequence that $\nu_n \weakto \nu$ in duality with $C_b(Z)$ to some $\nu \in \PP(\Z)$. What is left to do, is to understand the nature of this convergence of probability measure, and how it is reflected at the level of $L^{2^*}$-strong convergence of the original functions $u_n$. Notice that, by standard semicontinuity and Rellich-compactness arguments (see \cite[Appendix A]{NobiliParise24}) we always have that $u_n$ converges (up to a subsequence) $L^{2^*}$-weak and $L^2_{loc}$-strong to some function $u_\infty \in W^{1,2}_{loc}(Y)$ with $|\nabla u_\infty|\in L^2(\mu)$, as well as that
\[
|\nabla u_n|^2\mu_n \weakto \omega, \qquad \text{in duality with }C_{bs}(\Z),
\]
as $n\uparrow\infty$ for some finite Borel measure $\omega$ (since $\|\nabla u_n\|_{L^2(\mu_n)}$ is uniformly bounded, by \eqref{eq:extremals}). The problem here is that $u_\infty$, for now, might be very well identically zero, while $\nu_n $ might be weakly converging to singular objects, such as a Dirac mass, in the sense of probability measures. Also, a combination of this phenomena can occur, and in fact, a complete characterization of the limit measures $\nu,\omega$ can be given in terms of $u_\infty$ and possibly countable many Dirac masses (see \cite[Lemma A.7]{NobiliViolo24}): there is a countable set of indices $J$, points $(x_j)_{j \in J}\subseteq Y$ and weights $(\mu_j)_{j\in J},(\nu_j)_{j \in J}\subseteq \mathbb R^+$, satisfying
    \begin{align*}
        &\nu = |u_\infty|^{2^*}\mu + \sum_{j \in J}\nu_j \delta_{x_j}, \\
        &\omega \ge |\nabla u|^2\omega + \sum_{j \in J}\omega_j\delta_{x_j},\\
        & \nu_j^{2/2^*} \le A^2 \omega_j,\qquad \sum_{j \in J}\nu_j^{2/2^*} <\infty.
    \end{align*}
Notice also that  $\lim_n \|\nabla   u_n \|^2_{L^2(\mu_n)} \ge \omega(\Z)$ by weak lower semicontinuity.  Therefore, by combining everything, we can finally produce the key chain of estimates in the concentration compactness method:
    \begin{align*}
        1 =  \lim_{n\to\infty} \int |u_n|^{2^*}\, \d \mu_n  &\ge \liminf_{n\to\infty}  \tilde  A_n^2\| \nabla  u_n \|^2_{L^2(\mu_n)}  \\
               &\ge  A^2 \omega(\Z) \ge  A^2\int |\nabla  u_\infty|^2\,\d \mu + \sum_{j\in J} \nu_j^{2/2^*} \\
          &\overset{\eqref{eq:sob limit Y}}{\ge} \Big(\int |u_\infty|^{2^*}\, \d \mu\Big)^{2/2^*} + \sum_{j\in J} \nu_j^{2/2^*} \\
           &\ge \Big( \int |u_\infty|^{2^*}\, \d \mu  + \sum_{j\in J} \nu_j \Big)^{2/2^*} = \nu(Y)^{2/2^*} = 1,
    \end{align*}
    where we used, in the last inequality, the concavity of the function $t^{2/2^*}$. In particular, all the inequalities must be equalities and, since $t^{2/2^*}$ is strictly concave, we infer that every term in the sum $\int |u_\infty|^{2^*}\, \d \mu  + \sum_{j\in J} \nu_j^{2/2^*}$ must vanish except one. By the assumption \eqref{eq:Levyscalings} and since $|u_\infty|^{2^*}\mm_n \rightharpoonup \nu$ in $C_b(\Z),$ we have $\nu_j\le 1-\eta$ for every $j \in J$. Hence $\nu_j=0$ and $\|u_\infty\|_{L^{2^*}(\mu)}=1$.
    This means that $u_n$ converges $L^{2^*}$-strong to $u$. Moreover, retracing the equalities in the above we have that $\lim_n \int |\nabla u_n|^2\, \d \mu_n = \int |\nabla u_\infty|^2\,\d\mu$. This proves point ii). Finally, equality in the fourth inequality is precisely part iii) of the statement.
\end{proof}
It is worth mentioning that related principles have been investigated also in the series of works \cite{AntonelliFogagnoloPozzetta21,AntonelliNardulliPozzetta22} (based on \cite{Nardulli14}) and applied to the study of the isoperimetric problem on non-compact manifolds in \cite{AntonelliBrueFogagnoloPozzetta22,AntonelliPasqualettoPozzettaSemola22,AntonelliPasqualettoPozzettaSemola23,AntonelliPozzetta23}. We refer to \cite{Pozzetta_survey} for a nice overview. Other recent applications of concentration compactness principles for variational problems can be found in the theory of clusters \cite{Resende23,NovagaPaoliniStepanovTortorelli23,NovagaPaoliniStepanovTortorelli22} and in the study of isoperimetric partitions \cite{CesaroniNovaga22,CesaroniFragalaNovaga23,CesaroniNovaga23_1,NovagaPaoliniTortorelli23,NobiliNovaga24}.
\section{The main argument for the stability}\label{sec:sketch}
In this part, we combine the three ingredients presented in the previous section and exhibit the main proof argument to get stability results for Sobolev inequalities under Ricci lower bounds.
\subsection{Proof in the non-compact case}
Here, we deal with the non-compact case.
\begin{proof}[Proof of Theorem \ref{thm:qualitative SobAVR intro}]
We can clearly assume that $\| u\|_{L^{2^*}(M)}=1$. Moreover, by an approximation argument, it is also sufficient to prove the statement for $u \in W^{1,2}(M)$.

We proceed by contradiction and suppose that there exist $\eps>0$, a sequence of $d$-dimensioanl Riemannian manifolds $(M_n,g_n)$ for $n\in\N$ satisfying ${\sf AVR}(M_n)>V$ and there are non-zero functions $u_n \in W^{1,2} \cap L^{2^*}(M_n)$ satisfying
\begin{equation}
    \| u_n\|_{L^{2^*}(M_n)} \ge (A_n-1/n) \| \nabla u_n\|_{L^2(M_n)}, \label{eq:AVR sobolev proof}
\end{equation}
where $A_n\coloneqq  {\sf AVR}(M_n)^{-\frac 1d}S_{d,2}$, but, by the contradiction hypothesis, so that 
\begin{equation}
        \inf_{a,b,z} \frac{\| \nabla ( u_n  -u_{a,b,z}  )\|_{L^2(M_n)}}{\|\nabla u_n\|_{L^2(M_n)}} > \eps,\qquad \forall n \in \N.\label{eq:contradiction AVR}
\end{equation}
For every $\eta \in (0,1)$ and $n\in\N$, let  $y_n \in M_n$ and $t_n>0$ so that
\[ 
1-\eta = \int_{B_{t_n}(y_n)} |u_n|^{2^*}\, \d {\rm Vol}_{g_n} =  \sup_{y \in M_n} \int_{B_{t_n}(y)} |u_n|^{2^*}\, \d {\rm Vol}_{g_n}.
\]
Define now $\sigma_n \coloneqq  t_n^{-1}$ and consider the sequence of pointed metric measure space $(Y_n,\rho_n,\mu_n,y_n) \coloneqq  (M_n, \sfd_{\sigma_n},\mm_{\sigma_n},y_n)$, where recall $\sfd_{\sigma_n}\coloneqq \sigma_n \cdot\sfd_{g_n}$, $\mm_{\sigma_n}\coloneqq \sigma_n^d \cdot{\rm Vol}_{g_n}$ and consider the functions $u_{\sigma_n}\coloneqq \sigma_n^{-d/2^*}u_n \in W^{1,2}(Y_n)$. By the compatibility properties of the ${\sf RCD}$-class, it is evident that
\[
(Y_n,\rho_n,\mu_n)\qquad\text{is an }{\sf RCD}(0,d)\text{ space},
\]
and, by definition, it holds
\begin{equation}
    \frac{\mu_n( B^{Y_n}_R(y))}{\omega_d R^d} =\frac{ {\rm Vol}_{g_n}( B_{ R/\sigma_n}(y))}{\omega_d (R/\sigma_n)^d}\label{eq:scaling}
\end{equation}
for all $R>0,n\in\N$. In particular, {by Bishop-Gromov monotonicity} we deduce the uniform bounds
\begin{align*}
    &\mu_n(B^{Y_n}_1(y_n)) \in {(V,\omega_d]},\\
    &{\sf AVR}(Y_n)  ={\sf AVR}(M_n) \in ({V},1],\\
    & A_n \in (S_{d,2}, {V^{-\frac 1d}}S_{d,2}),
\end{align*}
for all $n \in\N$. By similar scaling arguments, for every $n\in\N$ we have
\[
1-\eta =  \int_{B^{Y_n}_1(y_n)} |u_{\sigma_n}|^{2^*}\, \d \mu_n\qquad \text{and}\qquad  \| u_{\sigma_n}\|_{L^{2^*}(\mu_n)}  \ge (A_n-1/n) \| \nabla u_{\sigma_n}\|_{L^2(\mu_n)}.
\]
We can then pass to a not relabelled subsequence so that $A_n\to A$, for some $A>0$ finite.

We are therefore in position to invoke Theorem \ref{thm:CC_Sobextremals} with $\eta \coloneqq \eta_N/2$ and get that,up to a subsequence, $(Y_n,\rho_n,\mu_n,y_n)$ pmGH-converges to some  pointed  ${\sf RCD}(0,d)$ space $(Y,\rho,\mu, z)$ satisfying
\begin{equation}
\|u\|_{L^{2^*}(\mu)} \le A \|\nabla u\|_{L^2(\mu)},\qquad \forall u \in W^{1,2}(Y).
\label{eq:ausixliary}
\end{equation}
Recall that the sharpness result in Theorem \ref{thm:sob avr rcd} gives automatically that ${\sf AVR}(Y)^{-\frac 1d}S_{d,2} \le A$ (see also Remark \ref{rem:sharp}). Moreover, we also have by Theorem \ref{thm:CC_Sobextremals} that $u_{\sigma_n}$ convergence $L^{2^*}$-strong  to a non-zero function $u_\infty \in W^{1,2}_{loc}(Y)$, that $\|\nabla u_{\sigma_n}\|_{L^2(\mm_{\sigma_n})}\to \|\nabla u_\infty\|_{L^2(\mu)}$ and that equality equality holds in \eqref{eq:ausixliary} with $u_\infty$. Thus, since $u_\infty$ is non-zero, we have the non-trivial estimate
\begin{align*}
     {\sf AVR}(Y)^{-\frac 1d}S_{d,2} \| \nabla u_\infty \|_{L^2(\mu)} &\ge  \|u_\infty \|_{L^{2^*}(\mu)} =\lim_{n\to \infty} \|u_{\sigma_n} \|_{L^{2^*}}(\mu_n) \\
     &\ge  \lim_{n\to \infty} (A_n -1/n)\|\nabla u_{\sigma_n}\|_{L^2(\mu_n)} = A\|\nabla u_\infty\|_{L^2(\mu)},
\end{align*}
giving that
\[ 
{\sf AVR}(Y)^{-\frac 1d}S_{d,2} =A,\qquad \text{hence also}\qquad {\sf AVR}(Y)=\lim_{n\to\infty}{\sf AVR}(M_n).
\]
 In particular, $u_\infty$ satisfies the rigidity part of Theorem \ref{thm:sob avr rcd}, which gives that $Y$ is a $N$-Euclidean cone and, for some tip  $z_0 \in Y$ and suitable $a \in \R, b>0$, we find
\[
u_\infty  = \frac{a}{(1+b\rho^2(\cdot,z_0))^{\frac{d-2}{2}}}.
\]

Take now any $z_n \in Y_n$ with $z_n \to z_0$, thanks to the underlying pmGH-convergence. Writing $f(t) \coloneqq a(1+bt^2)^{\frac{2-N}{2}}$ for every $t \in \R^+$, recalling $|f|^{2^*}, |f'|^2\le C t^{-2N+2}$, it is not difficult to show that  $f\circ \rho_n(\cdot,z_n)$ converges $L^{2^*}$-strong to $u_\infty$ and that $ |\nabla(f\circ \rho_n(\cdot,z_n))| $ converges $L^2$-strong to $|\nabla u_\infty|$ (see \cite[Lemma 7.2]{NobiliViolo24}, using $|\nabla (f\circ\rho_n(\cdot,z_n))| = |f'|\circ \rho_n(\cdot,z_n) $ by the chain rule). Using now that $\|\nabla u_{\sigma_n}\|_{L^2(\mm_{\sigma_n})}\to \|\nabla u_\infty\|_{L^2(\mu)}$ and $\| \nabla (f\circ \rho_n(\cdot,z_n))\|_{L^2(\mu_n)} \to \|\nabla u_\infty\|_{L^2(\mu)}$, and by polarization identity using also \cite[Lemma A.5]{NobiliViolo24}, we then deduce
\begin{equation}
   \lim_{n\to\infty} \| \nabla \big( u_{\sigma_n} - f\circ \sfd_{\sigma_n}(\cdot, z_n)\big)\|_{L^{2}(\mm_{\sigma_n})} = 0.\label{eq:un AVR gradient recovery}
\end{equation}
Scaling back, \eqref{eq:un AVR gradient recovery} becomes
\[
   \lim_{n\to\infty} \| \nabla \big( u_n - (\sigma_n^{N/2^*}f)\circ (\sigma_n\sfd_n(\cdot, z_n) ) \big)\|_{L^{2}(\mm_{\sigma_n})} = 0.
\]
This means that the sequence  $v_n \coloneqq   a\sigma_n^{d/2^*} (1+  b\sigma_n^2\sfd_n(\cdot,z_n)^2)^{\frac{2-d}{2}}$, satisfies
\[
   \limsup_{n\to\infty} \frac{\| \nabla ( u_n - v_n )\|_{L^2(M_n)}}{\|\nabla u_n\|_{L^2(M_n)}} =0,
\] 
having also used that $\|\nabla u_n\|_{L^2(M_n)}\ge S_{d,2}^{-1}\avr_{g_n}(M_n)^{1/d} \|u_n\|_{L^{2^*}(M_n)}\ge S_{d,2}^{-1}v^{\frac 1d}.$ Since $v_n = u_{a_n,b_n,z_n}$ for $a_n = a\sigma_n^{d/2^*},b_n = b\sigma_n^2$, we find a contradiction with \eqref{eq:contradiction AVR}. The proof is therefore concluded.
\end{proof}
\subsection{Comments for the compact case}
In this part, we briefly comment on the necessary modification of the proof of Theorem \ref{thm:qualitative SobAVR intro} to obtain the result iii) in Theorem \ref{thm:into main compact}. Even though the main proof strategy is the same, there are some interesting difficulties to be faced.

\medskip

First, iii) in Theorem \ref{thm:into main compact} will be obtained again by a contradiction argument, i.e.\ we suppose there exist $\eps>0,$ a sequence $d$-dimensional Riemannian manifolds $(M_n,g_n)$ with ${\sf Ric}_{g_n}\ge d-1$ and non-constant functions $u_n\colon M_n \to \R$, for simplicitly so that $\|u_n\|_{L^{2^*}(\nu_n)}=1$, for all $n\in\N$ so that 
\[
\frac{\|u_n\|_{L^{2^*}(\nu_n)}^2-\|u_n\|_{L^2(\nu_n)}}{\| \nabla u_n\|_{L^2(\nu_n)}} > \frac{2^*-2}{d}-\delta_n
\]
for some  $\delta_n\downarrow 0$, but so that for any $n\in\N$, it holds
\begin{equation}
    \inf_{a,b,z} \| u_n- v_{a,b,z}\|_{L^{2^*}(\nu_n)} + \| \nabla( u_n- v_{a,b,z})\|_{L^2(\nu_n)} > \eps,
\label{eq:contradiction intro}
\end{equation}
where  $v_{a,b,z} = a(1 - b\cos(\sfd_{g_n}(\cdot,z))^{\frac{2-d}{2}}$. Here $\nu_n$ is the renormalized volume measure on $M_n$.

As in the proof of Theorem \ref{thm:qualitative SobAVR intro}, we choose points $y_n \in M_n$ and constants $\sigma_n>0$ so that, defining
\begin{equation}
(Y_n,\rho_n,\mu_n)\coloneqq (M_n,\sigma_n\sfd_{g_n},{\rm Vol}_{g_n}(M_n)^{-1}\sigma_n^d{\rm Vol}_n),\qquad 
 u_{\sigma_n} = \sigma_n^{-d/2^*}u_n,
\label{eq:transformation intro}
\end{equation}
we have
\[
\int_{B_1^{Y_n}(y_n)} |u_{\sigma_n}|^{2^*} \, \d \mu_n=1-\eta,
\]
for $\eta>0$ suitable. The spaces $(Y_n,\rho_n,\mu_n)$ are in particular, by compatibility properties, normalized ${\sf RCD}(0,d)$ spaces. Note that it can happen that $\sigma_n \uparrow \infty$ (corresponding to a concentrating behavior of the sequence $u_n$) while instead $\sigma_n \ge c>0$ uniformly, by the maximal diameter theorem. 

Thanks to Gromov's precompactness theorem \cite{Gromov07} it is possible again to show that, up to a subsequence, $(Y_n,\rho_n,\mu_n,y_n)$ converges in the pmGH-sense to a limit pointed $\RCD(0,d)$ space $(Y,\rho,\mu, y)$. As already remarked, $Y$ might be also unbounded.

Using a generalized version of the concentration compactness result in Theorem \ref{thm:CC_Sobextremals} in this setting, see \cite[Theorem 6.2]{NobiliViolo24}), it is again possible to show that, up to a further subsequence, $u_{\sigma_n}$ converges $L^{2^*}$-strongly to some $0\neq u_\infty \in L^{2^*}(\mu)$. It also follows that $u_\infty$ is extremal for a formal limit Sobolev inequality on $Y$, that reads as
\[
\| u_\infty \|_{L^{2^*}(\mu)}^2 \le \frac{2^*-2}{d}\| \nabla u_\infty\|_{L^2(\mu)}^2 + \sigma^{-2 }\|u_\infty\|_{L^2(\mu)},\qquad \forall u \in W^{1,2}(Y),
\]
where $\sigma_n \to \sigma \in (0,\infty]$, and the $L^2$-term is present if $\sigma<\infty$, while it is not if $\sigma=\infty$. Recall that the case $\sigma=0$ is not possible by the maximal diameter theorem. The key point is the following dichotomy scenario:
\begin{align*}
    &\text{Concentration}  &\Rightarrow \qquad &Y \text{ is a $d$-Euclidean cone and } u \text{ is a Euclidean bubble} \\
    &\text{Non-concentration} &\Rightarrow \qquad &Y \text{ is a $d$-spherical suspension and } u \text{ is a spherical bubble}
\end{align*}
The first is settled again by the rigidity of Sobolev inequalities in non-compact setting, c.f. Theorem \ref{thm:sob avr rcd}. The second case requires a similar analysis for compact settings, see \cite[Theorem 5.2]{NobiliViolo24}. The proof will be then completed by carefully bringing back this information from $u_\infty$ to the sequence $u_n$ to find a contradiction with \eqref{eq:contradiction intro}. 
\section{Further results and open questions}\label{sec:further}
\subsection{Stability in the AB-program}\label{sec:AB-program}
Here we report some related results obtained in \cite{NobiliParise24}. We first introduce briefly the AB-program in geometric analysis following \cite{Hebey99}. We refer to this book for a detailed introduction and a complete list of references.

\medskip

Given $d>2$ and $(M,g)$ a closed $d$-dimensional smooth Riemannian manifolds, we can consider for constants $A,B>0$ the following type of Sobolev inequality
\begin{equation}
\|u\|_{L^{2^*}(M)}^2 \le A\|\nabla u\|_{L^2(M)}^2 + B \| u\|_{L^2(M)}^2,\qquad \forall u \in W^{1,2}(M).
\label{eq:SobAB}
\end{equation}
In the AB-program we consider the following Sobolev constants:
\[
\begin{array}{cc}
\alpha(M):= \inf \{ A \ : \  \eqref{eq:SobAB} \text{ holds for some $B$}  \}, &\quad  \beta(M) := \inf \{ B \ : \  \eqref{eq:SobAB} \text{ holds for some $A$} \},
\end{array}
\]
where we set the infimum over an empty set equal to $+\infty$. It is rather easy to see (e.g.\ \cite[Sec 4.1]{Hebey99}) that the latter satisfies
\[
\beta(M) = {\rm Vol}_g(M)^{-2/d}.
\]
On the other hand, the value of $\alpha(M)$ is linked to the sharp Euclidean Sobolev constant. More precisely, in \cite{Aubin76-2} (see also \cite[Theorem 4.5]{Hebey99}) we have
\begin{equation}\label{eq:alfa manifold}
	\alpha(M)= S_{d,2}^2.
\end{equation}
A more subtle question is whether these two constants are actually attained, i.e. if they are minima. For instance, for $M=\S^d$ we have in \eqref{eq:Sob sphere} the validity of a Sobolev inequality with $A=\alpha(\S^d)$ and $B = \beta(\S^d)$ with extremizers given by \eqref{eq:spherical bubbles}. 

More generally (see \cite{Bakry94},\cite[Theorem 4.2]{Hebey99} and \cite{HebeyVaugon96}) it is known that these two constants are always attained, and thus it makes sense to fix $B = \beta(M)$ and proceed with the $A$-part of this program, i.e.\ minimizing over the admissible $A>0$ and consider more generally all sub-critical ranges 
\begin{equation}
\|u\|_{L^{q}(M)}^2 \le A\|\nabla u\|_{L^2(M)}^2 + {\rm Vol}_g(M)^{2/q-1} \| u\|_{L^2(M)}^2,
\label{eq:Sob_qA}
\end{equation}
for $q \in (2,2^*]$. Then, we can define the corresponding notions of second-best optimal constant
\[
A_q^{\rm opt}(M) := \inf \{A>0 \colon  \eqref{eq:Sob_qA}\text{ holds with }A\} \cdot {\rm Vol}_g(M)^{1-2/q}.
\]
Differently from $\alpha(M),\beta(M)$ this constant is automatically a minimum. Moreover, universal bounds depending on the geometry of $M$ are given for $ A_q^{\rm opt}(M)$  in \cite[Theorem 4.4]{DH02} (see also \cite[Proposition 5.1]{NobiliViolo21} for general $q$).
\begin{remark}
    \rm
    The renormalization appearing in the above is non-standard in the AB-program literature.  Thanks to it, the above notation is consistent (modulo adding the subscript $q$) with that introduced in \eqref{intro: AM < AS} under Ricci curvature lower bounds. In fact, denoting as customary by-now in this note, the renormalized volume form $\nu= {\rm Vol}_g/{\rm Vol}_g(M)$, the optimal Sobolev inequality \eqref{eq:Sob_qA} equivalently reads
    \begin{equation}
        \|u\|_{L^{q}(\nu)}^2 \le A_q^{\rm opt}(M)  \|\nabla u\|_{L^2(\nu)}^2 +  \| u\|_{L^2(\nu)}^2.\label{eq:Sob_qA normalized}
    \end{equation}
    From here, we shall stick to this choice to be consistent in this note.  \fr
\end{remark}

Having established two classes of optimal Sobolev inequalities, it is natural to investigate the existence of extremal functions. For each $q \in (2,2^*]$, let us then define
\begin{align*}
    \mathcal{M}_q(A) &:= \{u \in W^{1,2}\setminus\{0\}  \colon  \text{equality holds with } u \text{ in }\eqref{eq:Sob_qA normalized} \}.
\end{align*}
Notice that $\mathcal M_q(A)$ always contains constant functions, hence it is never empty. However, we do not restrict our analysis to \emph{non-constant} extremal functions as these could be the only ones (see also \cite{Frank21}). Finally, we refer the reader to  \cite{Hebey01,DjadliDruet01,BarbosaMontenegro07,BarbosaMontenegro09,Barbosa10,BarbosaMontenegro12}, for more on extremal functions in the AB-program.
\medskip

Next, we present our main results achieved in \cite{NobiliParise24} and then comment briefly on related literature. Recall that differently from \cite{NobiliParise24}, here we are using a different renormalization for $A^{\rm opt}_{2^*}(M)$. By definition of $\alpha(M)$, we always have $A^{\rm opt}_{2^*}(M) {\rm Vol}_g(M)^{-2/d} \ge S^2_{d,2}$. However, assuming this inequality to be strict, we gain pre-compactness of (normalized) extremizing sequences via concentration-compactness methods, see \cite[Proposition 2.1]{NobiliParise24}. Under this assumption, or more easily in the sub-critical range, we can prove the following. 
\begin{theorem}\label{thm: stability AB program}
    Let $(M,g)$ be a closed $d$-dimensional Riemannian manifold, $d>2$. Assume that either $2<q<2^*$, or $q=2^*$ and $A^{\rm opt}_{2^*}(M) {\rm Vol}_g(M)^{-2/d} > S^2_{d,2}$. Then, there are constants $C>0,\gamma\ge 0$ depending on $(M,g)$ and on $q$ so that, for every $u \in W^{1,2}(M)\setminus\{0\}$, it holds
    \begin{equation}
        \frac{ A^{\rm opt}_q(M)\|\nabla u\|^2_{L^2(\nu)} +\|u\|_{L^2(\nu)}^2}{\| u\|_{L^{q}(\nu)}^2} - 1 \ge C \left( \inf_{v \in {\mathcal M}_q(A)} \frac{\| u- v\|_{W^{1,2}(M)} }{\|u\|_{W^{1,2}(M)}} \right) ^{2+\gamma}. \label{eq:deficit AoptB}
    \end{equation}
\end{theorem}
The above stability estimaate is \emph{strong}, as the full $W^{1,2}$-distance from a set a non-empty set of extremal functions is controlled by the related Sobolev deficit. The proof strategy is based on a recent argument that is particularly effective to treat general Riemannian settings where one cannot rely on underlying symmetries (as opposed to for instance, in \eqref{eq:spherical bubbles} for $\S^d$). This was pioneered in \cite{ChodoshEngelsteinSpolaorSpolaor23} for the Riemannian isoperimetric inequality. Later, and similar to our setting, this strategy was also used in \cite{EngelsteinNeumayerSpolaor22} to deduce quantitative stability properties of minimizing Yamabe metrics { on closed manifolds that are assumed non-conformally equivalent to the round sphere. In this case, the existence and pre-compactness properties of minimizing Yamabe metrics are known by the resolution of the Yamabe problem \cite{Trudinger68,Aubin76-3,Schoen84}.} The main idea of \cite{ChodoshEngelsteinSpolaorSpolaor23,EngelsteinNeumayerSpolaor22} that we also use to prove Theorem \ref{thm: stability AB program} is to combine a Lyapunov-Schmidt reduction argument \cite{Simon83} with the \L ojasiewicz inequality \cite{Lojasiewicz65} to produce local quantitative stability estimates. Then, we prove compactness results for almost minimizers (see \cite[Proposition 2.1]{NobiliParise24}) to combine a finite number of local estimates with uniform parameters via a local-to-global step. { The latter step is analogous to that of Bianchi-Egnell's work \cite{BianchiEgnell91} and consists in showing that the Sobolev deficit in the left of \eqref{eq:deficit AoptB} is far away from zero if $u$ varies at uniform positive $W^{1,2}$-distance from $\mathcal M_q(A)$}. The assumption $A^{\rm opt}_{2^*}(M) {\rm Vol}_g(M)^{-2/d} > S^2_{d,2}$ is crucial in the critical exponent  case.

\medskip

Given the above results, it is also natural to ask if our results hold \emph{sharp} with quadratic exponent, i.e.\ with $\gamma=0$. This is the typical desirable feature in stability problems such as for \eqref{eq:strong stability Eucl}. However, in \cite{ChodoshEngelsteinSpolaorSpolaor23,EngelsteinNeumayerSpolaor22}, the authors show that quadratic stability might fail, on specific manifolds. This degenerate phenomenon has been later deeply analyzed by \cite{Frank21}, and subsequently in \cite{FrankPeteranderl24} and \cite{BrigatiDolbeaultSimonov24,BrigatiDolbeaultSimonov24_2,BrigatiDolbeaultSimonov24_3,AndradekonigRatzkinWei24} in different contexts. Next, we present degenerate phenomena for Sobolev inequalities.
\begin{theorem}\label{thm:degenerate Aopt main}
    Let $q \in (2,2^*]$ and let $(M,g)$ be satisfying the hypothesis of Theorem \ref{thm: stability AB program}. Assume further that there are no non-constant extremal functions, i.e.\ $\mathcal M_q(A) = \{ c \colon c\in \R\setminus\{0\}\}.$ Then, the stability in Theorem \ref{thm: stability AB program} is degenerate, i.e.\ it must hold with some $\gamma >0$. 
\end{theorem}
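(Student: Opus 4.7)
The plan is to argue by contradiction: assuming Theorem \ref{thm: stability AB program} holds with $\gamma = 0$, I would exhibit a family of test functions for which quadratic stability forces a strict inequality between $A^{\rm opt}_q(M)$ and the Poincar\'e threshold, and then produce a non-constant extremal, contradicting the hypothesis $\mathcal{M}_q(A) = \{c \in \R \setminus \{0\}\}$. Let $\phi_1 \in W^{1,2}(M)$ be a Laplace eigenfunction for the first strictly positive eigenvalue $\lambda_1 > 0$ of $(M,g)$, normalized by $\int \phi_1 \, \d\nu = 0$ and $\|\phi_1\|_{L^2(\nu)} = 1$; for $\eps > 0$ small, set $u_\eps := 1 + \eps \phi_1$. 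Since extremals are only nonzero constants and $\int \phi_1 \, \d\nu = 0$, the closest element of $\mathcal{M}_q(A)$ to $u_\eps$ in $W^{1,2}(M)$ is the constant $1$, whence $\|u_\eps - 1\|^2_{W^{1,2}(M)} = \eps^2(1+\lambda_1)$ and $\|u_\eps\|^2_{W^{1,2}(M)} = 1 + \eps^2(1+\lambda_1)$.

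A direct Taylor expansion of $(1+\eps\phi_1)^q$ (valid for small $\eps$ as $u_\eps > 0$) gives $\|u_\eps\|^2_{L^q(\nu)} = 1 + (q-1)\eps^2 + O(\eps^3)$, which combined with $\|u_\eps\|^2_{L^2(\nu)} = 1 + \eps^2$ and $\|\nabla u_\eps\|^2_{L^2(\nu)} = \eps^2\lambda_1$ yields the key identity
\[
\frac{A^{\rm opt}_q(M)\|\nabla u_\eps\|^2_{L^2(\nu)} + \|u_\eps\|^2_{L^2(\nu)}}{\|u_\eps\|^2_{L^q(\nu)}} - 1 = \eps^2\bigl(A^{\rm opt}_q(M)\lambda_1 - (q-2)\bigr) + O(\eps^3).
\]
Matching this deficit against the putative quadratic lower bound $C\eps^2(1+\lambda_1) + O(\eps^4)$, dividing by $\eps^2$, and sending $\eps \to 0$ forces the strict inequality $A^{\rm opt}_q(M)\lambda_1 - (q-2) \geq C(1+\lambda_1) > 0$, hence in particular $A^{\rm opt}_q(M)\lambda_1 > q-2$.

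The decisive and most delicate step, which I expect to be the main obstacle, is to extract from $A^{\rm opt}_q(M)\lambda_1 > q-2$ a non-constant extremal in $\mathcal{M}_q(A)$. I would take a maximizing sequence $(u_n)$ for $Q(u) := (\|u\|^2_{L^q(\nu)} - \|u\|^2_{L^2(\nu)})/\|\nabla u\|^2_{L^2(\nu)}$ normalized by $\|\nabla u_n\|_{L^2(\nu)} = 1$, decompose $u_n = c_n + v_n$ with $\int v_n \, \d\nu = 0$, and first rule out the scenario $|c_n| \to \infty$: a Taylor expansion of $\int (c_n + v_n)^q \, \d\nu$ (using boundedness of $v_n$ in $W^{1,2}$ and Sobolev embedding to control higher-order remainders) gives $Q(u_n) = (q-2)\|v_n\|^2_{L^2(\nu)} + o(1)$, and Poincar\'e's inequality then caps $Q(u_n) \leq (q-2)/\lambda_1 + o(1)$, contradicting $Q(u_n) \to A^{\rm opt}_q(M)$. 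Hence $(c_n)$, and so $(u_n)$, is bounded in $W^{1,2}(M)$. Rellich compactness (if $q < 2^*$) or the concentration-compactness of \cite[Proposition~2.1]{NobiliParise24} (if $q = 2^*$, whose hypothesis $A^{\rm opt}_{2^*}(M){\rm Vol}_g(M)^{-2/d} > S_{d,2}^2$ rules out bubbling) then provides a subsequential limit $u^*$ with strong $L^q(\nu)$ and $L^2(\nu)$ convergence. Lower semicontinuity of the Dirichlet energy gives $Q(u^*) \geq A^{\rm opt}_q(M)$, and $u^*$ cannot be constant (else $Q(u_n) \to 0$). Thus $u^*$ is a non-constant extremal, contradicting $\mathcal{M}_q(A) = \{c \in \R\setminus\{0\}\}$ and establishing that $\gamma > 0$ is necessary.
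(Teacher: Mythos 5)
Your argument is correct and captures the essential mechanism. Testing putative quadratic stability with $u_\eps = 1 + \eps\phi_1$ extracts the second variation of the deficit at the constant extremal and forces the strict inequality $A^{\rm opt}_q(M)\lambda_1 > q-2$; this is exactly the condition that prevents a maximizing sequence $u_n = c_n + v_n$ from drifting to constants ($|c_n| \to \infty$), so the compactness machinery produces a non-constant extremal, contradicting the hypothesis. In spirit this is the same dichotomy that drives the Lyapunov--Schmidt/\L{}ojasiewicz framework the paper describes for Theorem \ref{thm: stability AB program}: the second variation of the deficit at constants is the quadratic form $\phi \mapsto A^{\rm opt}_q(M)\|\nabla\phi\|^2_{L^2(\nu)} - (q-2)\|\phi\|^2_{L^2(\nu)}$ on mean-zero $\phi$, which is degenerate exactly when $A^{\rm opt}_q(M)\lambda_1 = q-2$, and it is that degeneracy which forces $\gamma>0$. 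Your direct test-function argument avoids invoking the full \L{}ojasiewicz machinery at constants, at the cost of having to rerun the compactness step (existence of a non-constant extremal) by hand.

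One step you flag but do not fully resolve is the Taylor expansion of $\int (c_n + v_n)^q\,\d\nu$ when $|c_n| \to \infty$: since $v_n$ is only bounded in $W^{1,2}(M)$ and not in $L^\infty$, the remainder in $(1+t)^q = 1 + qt + \tfrac{q(q-1)}{2}t^2 + R(t)$ must be controlled on the set where $|v_n/c_n|$ is not small. This can be done by splitting $\{|v_n|\le \delta |c_n|\}$ and $\{|v_n|>\delta|c_n|\}$ and using the Sobolev bound $\|v_n\|_{L^q(\nu)}\lesssim 1$ to make the contribution of the bad set of lower order, but it needs to be written out. The remaining steps (ruling out $u^*=0$ or constant via the strong $L^q/L^2$ convergence and $Q(u_n)\to A^{\rm opt}_q(M)>0$; lower semicontinuity of the Dirichlet energy to conclude $u^*\in\mathcal M_q(A)$; invoking \cite[Proposition 2.1]{NobiliParise24} in the critical case under the strictness hypothesis) are sound as written.
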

We point out that this result is in line with the degenerate examples studied by Frank in \cite{Frank21} for the critical Sobolev inequality on $M=\mathbb S^1\left(\tfrac{1}{\sqrt{d-2}}\right)\times \mathbb S^{d-1}$ and, for sub-critical ones, on $M=\S^d$ (see \cite[Corollary 1.4]{NobiliParise24}). 
\subsection{Geometric stability: more functional inequalities}\label{sec:further results compact}
To prove ii) of Theorem \ref{thm:into main compact}, the key ingredient is a quantitative diameter-improvement of the P\'olya-Szeg\H o inequality. { We give here a short overview of this tool of independent interest. We shall stick for simplicity to the settings of smooth manifolds, even though the analysis conducted in \cite{NobiliViolo24_PZ} covers metric space settings.

\medskip 
Given $p\in(1,\infty)$ and a closed $d$-dimensional Riemannian manifold $(M,g)$ satisfying ${\sf Ric}_g\ge d-1$, we have the validity of the following P\'olya-Szeg\H{o} inequality
\begin{equation}
    \int|\nabla u|^p\,\d \nu \ge {\sf BBG}_d(\diam(M))^{p} \int_0^\pi |(u^*)'|^p\,\d \mm_{d-1,d},\qquad \forall u \in W^{1,p}(M), \label{eq:polya BBG}
\end{equation}
where $\nu = {\rm Vol}_g/{\rm Vol}_g(M)$ is the renormalized volume form, $\mm_{d-1,d}$ is the weight defined by
\[
\mm_{d-1,d} \coloneqq \frac{\sin^{d-1}(t)\,\d t\mres{(0,\pi)} }{\int_0^{\pi} \sin^{d-1}(t)\,\d t},
\]
and $u^*$ is the decreasing rearrangement of $u$ into the model interval $(0,\pi)$ with respect to $\mm_{d-1,d}$
\[
    (0,\pi)\ni x \mapsto u^*(x) \coloneqq \inf\{ t >\essinf u \, \colon \, \nu(\{u>t\})< \mm_{d-1,d}((0,x)) \}.
\]
By precomposing $u^*$ with the geodesic distance on $\S^d$ from e.g.\ the south pole, it is possible to relate this notion of decreasing rearrangement with that of \emph{spherical rearrangement} previously considered in the literature (see, e.g., \cite{BerardMeier1982}). The function $ (0,\pi] \ni D\mapsto {\sf BBG}_d(D) \in [1,+\infty)$ is explicit and called after the names of the authors in \cite{BerardBessonGallot85} where it was studied. 

The inequality \eqref{eq:polya BBG} can be proved by classical rearrangement arguments relying on an generalization of the celebrated L\'evy-Gromov inequality \cite{Gromov07}. In this generalization obtained in \cite{BerardBessonGallot85} the dependence on the diameter appears explicitly through the constant ${\sf BBG}_d(\diam(M))$ (see also \cite{Milman15,CavallettiMondinoSemola23} for related analysis) so that the corresponding rearrangement inequality would take the form \eqref{eq:polya BBG}. We refer to \cite[Theorem 1.2]{NobiliViolo24_PZ} for the proof of \eqref{eq:polya BBG} working also in nonsmooth settings (see also the previous work \cite{MondinoSemola20}). Moreover, since $\diam(M)\le \pi$ under the current assumptions, the key point (see \cite[Lemma 3.3]{CavallettiMondinoSemola23}) is the validity of
\[
    {\sf BBG}_d(D)^2 -1 \gtrsim (\pi-D)^d,\qquad\text{as }D\uparrow\pi.
\]
By combination of this asymptotic with \eqref{eq:polya BBG}, we can derive further} applications for different functional inequalities under positive Ricci lower bounds as we shall explore next.

\medskip

Recall that, if $(M,g)$ is so that ${\sf Ric}_g\ge d-1$, then comparison estimates for spectral quantities and sharp functional constants are in place. Here, we shall deal with Lichnerowicz spectral gap inequalities and log-Sobolev constants, as well as sub-critical optimal Sobolev inequalities. We refer to the book \cite{BakryGentilLedoux14} for a complete treatment and to \cite{CavallettiMondino17} for analogous results in non-smooth settings. For $d \in N,p\in(1,d)$, we denote by $\lambda_p(\S^d)$ the first non-trivial (Neumann) $p$-eigenvalue, also called $p$-spectral gap, of the $d$-dimensional round sphere. The next result is reported from \cite[Theorem 1.6]{NobiliViolo24_PZ} only in the context of smooth manifolds for simplicity, but works in more general and possibly weighted settings of non-smooth spaces.
\begin{theorem}\label{thm:main quant compact spaces}
For all $d\in (1,\infty)$, $p\in(1,\infty)$ and $q\in (2,2^*]$ (if $d>2$) there are constants $C_d>0$, $C_{d,p}>0$ and $C_{d,q}>0$ such that the following holds.
Let $(M,g)$ be a $d$-dimensional Riemannian manifolds with ${\sf Ric}_g\ge d-1$. Then:
\begin{itemize}
    \item[{\rm i)}] for all $u \in W^{1,p}(M)$ non-zero with $\int u|u|^{p-2}\,\d{\rm Vol}_g =0$ it holds
        \[
        \big(\pi- \diam(M)\big)^{d} \le C_{d,p} \left( \left(\int|u|^p\,\d {\rm Vol}_g\right)^{-1}\int |\nabla u|^p\,\d {\rm Vol}_g  - \lambda_p(\S^d)\right);
        \]
        \item[{\rm ii)}] it holds
        \[
        \big(\pi- \diam(M)\big)^d \le C_{d,q} \left(  \frac{q-2}{d} - A^{\rm opt}_q(M)\right);
        \]
        
        \item[{\rm iii)}] for  all $u \in W^{1,2}(M)$ non-constant with    $\int |u|\,\d{\rm Vol}_g=1$ it holds
        \[
        \big(\pi- \diam(M)\big)^d \le C_d \left(  \Big(\int |u|\log|u|\,\d{\rm Vol}_g\Big)^{-1}\int_{\{|u| >0\}}\frac{|\nabla u|^2}{|u|}\,\d{\rm Vol}_g- 2d \right).
        \]
\end{itemize}
\end{theorem}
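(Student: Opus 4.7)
The plan is to reduce all three estimates to a single quantitative enhancement of the P\'olya-Szeg\H o rearrangement inequality between $M$ and the round sphere $\S^d$, in the spirit of B\'erard-Besson-Gallot. Under the hypothesis ${\sf Ric}_g\ge d-1$, every $u\in W^{1,p}(M)$ admits an equimeasurable rearrangement $u^\#$ on $\S^d$, and the key technical step is the improved inequality
\[
\int_M |\nabla u|^p\,\d{\rm Vol}_g \,\ge\, \bigl(1+c_{d,p}(\pi-\diam(M))^d\bigr)\int_{\S^d} |\nabla u^\#|^p\,\d{\rm Vol}_{\S^d},
\]
a quantitative counterpart to the P\'olya-Szeg\H o principle underlying Theorem \ref{thm:polya noncompact}. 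It would be derived by integrating, via the co-area formula along the level sets of $u$, a diameter-sensitive enhancement of L\'evy-Gromov's isoperimetric inequality of the form $\mathcal I_M(v)\ge \mathcal I_{\S^d}(v)+c(\pi-\diam(M))^d$ on a definite range of interior volumes.

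Granted the quantitative P\'olya-Szeg\H o, each conclusion is obtained by specializing to a suitable test function and invoking the sharp spherical version of the corresponding inequality. For (i), given $u$ with $\int u|u|^{p-2}\,\d{\rm Vol}_g=0$, a two-sided rearrangement of $u^+$ and $u^-$ produces a mean-zero competitor on $\S^d$ whose Rayleigh quotient is bounded below by $\lambda_p(\S^d)$; equimeasurability preserves $\|u\|_{L^p}$, so dividing yields the stated diameter deficit. For (ii), applying the improved P\'olya-Szeg\H o to a non-constant $u\in W^{1,2}(M)$ and combining it with the sharp sub-critical Sobolev inequality on $\S^d$,
\[
\|v\|_{L^q(\nu_{\S^d})}^2 \le \tfrac{q-2}{d}\|\nabla v\|_{L^2(\nu_{\S^d})}^2 + \|v\|_{L^2(\nu_{\S^d})}^2,\qquad q\in(2,2^*],
\]
transfers the diameter deficit directly to the level of the optimal constant $A^{\rm opt}_q(M)$. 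For (iii), we run the same scheme on $|u|$, recasting the log-Sobolev energy through the identity $|\nabla u|^2/|u|=4|\nabla\sqrt{|u|}|^2$ to bring it into rearrangement-amenable form, and combine with the sharp log-Sobolev constant $2d$ on $\S^d$ of Bakry-Ledoux.

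The main obstacle is the quantitative P\'olya-Szeg\H o itself, with the sharp exponent $(\pi-\diam(M))^d$. The exponent $d$ matches the behavior of the spherical isoperimetric profile near the extremal volumes $0$ and $1$ and originates from the B\'erard-Besson-Gallot enhancement of L\'evy-Gromov. Exporting such an improvement to the present setting requires the sharp rigidity of L\'evy-Gromov in ${\sf RCD}(d-1,d)$-spaces (via Cavalletti-Mondino's needle decomposition, where maximal-diameter rigidity identifies $\S^d$ among spherical suspensions) together with a careful co-area bookkeeping to control the contribution of level sets whose relative volume approaches $0$ or $1$, where the isoperimetric gap degenerates. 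Once this perimeter-level input is secured, the three functional applications above become essentially routine.
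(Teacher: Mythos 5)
Your overall strategy --- prove a quantitative, diameter-sensitive P\'olya-Szeg\H{o} inequality in the spirit of B\'erard-Besson-Gallot, then specialize to the $p$-spectral gap, sub-critical Sobolev, and log-Sobolev inequalities via the sharp spherical constants --- matches the approach the paper describes and attributes to \cite{NobiliViolo24_PZ} and \cite{NobiliViolo24}. However, the isoperimetric input as you state it does not actually deliver the multiplicative P\'olya-Szeg\H{o} bound you then rely on. An additive improvement $\mathcal{I}_M(v)\ge\mathcal{I}_{\S^d}(v)+c(\pi-\diam(M))^d$ confined to a ``definite range of interior volumes'' cannot propagate, through the co-area formula, into a uniform factor $1+c_{d,p}(\pi-\diam(M))^d$ in front of $\int_{\S^d}|\nabla u^\#|^p$: a function $u$ concentrated near a point has all its superlevel sets in the excluded endpoint range, where the additive gap degenerates, and the ``careful bookkeeping'' you invoke there simply loses the gain, leaving only the plain P\'olya-Szeg\H{o} comparison with no improvement. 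The correct input --- and what B\'erard-Besson-Gallot \cite{BerardBessonGallot85} in fact provide --- is a \emph{multiplicative} improvement $\mathcal{I}_M(v)\ge\bigl(1+c_d(\pi-\diam(M))^d\bigr)\mathcal{I}_{\S^d}(v)$, valid for \emph{all} volume fractions $v\in(0,1)$ (the $(\pi-\diam)^d$ rate coming from the Bishop-Gromov volume deficit $\int_{\diam(M)}^{\pi}\sin^{d-1}t\,\d t$). With that input the multiplicative P\'olya-Szeg\H{o} estimate follows for every $u$ with no volume restriction, and the three specializations you outline then go through.

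A smaller point on item i): the two-sided rearrangement must be performed around a median-type level $m$ of $u$ (chosen so that each of $(u-m)^{\pm}$ occupies at most half the total volume, permitting Dirichlet comparison against the half-sphere), not around $0$: the centering condition $\int u|u|^{p-2}\,\d{\rm Vol}_g=0$ does not control the volume of $\{u>0\}$, so rearranging $u^{\pm}$ directly does not, in general, produce an admissible mean-zero competitor on $\S^d$.
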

The result i) for $p=2$ was already deduced in \cite{CavallettiMondinoSemola23}, where also a deep quantitative analysis has been carried out for almost eigenfunctions. We mention that a similar analysis was previously performed in \cite{CavallettiMaggiMondino19} to study quantitative stability results for the L\'evy-Gromov isoperimetric inequality.

The result ii) generalizes to all the sub-critical range the first two conclusions of Theorem \ref{thm:into main compact}. Indeed, we recall from \cite{Said83,BakryGentilLedoux14,Ledoux00} that an analogous comparison estimate as for \eqref{intro: AM < AS} holds in this range
\[
A^{\rm opt}_q(M)\le \frac{q-2}{d} =A^{\rm opt}_q(\S^d),\qquad\forall q \in (2,2^*],
\]
where, the latter equality was studied in \cite{Beckner93} for the sub-critical range. 

Finally, result iii) instead is a quantitative improvement of the sharp dimensional log-Sobolev inequality in this setting, since on the round sphere the optimal constant is precisely $2d$, see \cite{BakryGentilLedoux14}.

\subsection{Open questions}
We conclude here by raising some natural open questions related to the theory and the statements presented in this overview.

\medskip 

Given that we presented qualitative functional stability results for Sobolev inequalities under Ricci lower bounds, we raise here two natural open questions:
\begin{question}\label{Q1}\em 
    Is it possible to perform a quantitative analysis for Theorem \ref{thm:qualitative SobAVR intro}?
\end{question}
\begin{question}\label{Q2}\em 
     Is it possible to perform a quantitative analysis for iii) in Theorem \ref{thm:into main compact}?
\end{question}
Clearly, since the main technique used is a (concentration) compactness argument, to address these questions one needs to change the approach completely (or, at least, complement it with a quantitative local analysis around bubbles). Moreover, rearrangement techniques \'a la P\'olya-Szeg\H{o} seem to give no useful quantitative information at the level of almost extremal functions (even though, they are powerful enough to deal with quantitative geometric stability results, c.f.\ Theorem \ref{thm:main quant compact spaces}). Finally, a classical strategy \'a la Bianchi-Egnell \cite{BianchiEgnell91} is not expected to work for both the above questions since, in general, Euclidean or spherical bubbles are not extremal functions on $(M,g)$, unless $M$ is isometric to the corresponding model space with constant curvature (where both questions are already settled).

A rather technical, yet promising, approach would be to argue via $L^1$-localization methods, see \cite{CavallettiMondino17-Inv}. In fact, this has already proved successful in understanding the shape of almost extremizers in different variational problems, see \cite{CavallettiMaggiMondino19} for the L\'evy-Gromov isoperimetric inequality and \cite{CavallettiMondinoSemola23,FathiGentilSerres22} for spectral gap inequalities. However, we expect Question \ref{Q1} to be considerably harder compared to Question \ref{Q2} due to the non-compact nature of the underlying manifold geometry and due to presence of the ${\sf AVR}$-parameter (recall that $\delta$ in Theorem \ref{thm:qualitative SobAVR intro} depends on $V$). 

\medskip

We conclude this note by raising an open question related to Theorem \ref{thm: stability AB program}. In the critical exponent case $q=2^*$, a crucial assumption was made. Namely (and with the same notation as in there), it is asked that $A^{\rm opt}_{2^*}(M){\rm Vol}_g(M)^{-\frac 2d}>S^2_{d,2}$, where we stress that the inequality is required to be \emph{strict}. Since, in general, the greater or equal inequality is always in place (recall  $\alpha(M) =S^2_{d,2}$ and the discussion in Section \ref{sec:AB-program}), it is natural to wonder if Theorem \ref{thm: stability AB program} is still valid when $A^{\rm opt}_{2^*}(M){\rm Vol}_g(M)^{-\frac 2d}=S^2_{d,2}$. In this case, if one looks at extremizing sequences optimizing the constant $A^{\rm opt}_{2^*}(M)$ in \eqref{eq:Sob_qA normalized}, then we know that concentrating phenomena might occur, see \cite[Proposition 2.2]{NobiliParise24}. In light of this, we can raise the following:
\begin{question}\em 
    Consider a closed $d$-dimensional Riemannian manifold $(M,g)$ possibly so that $A^{\rm opt}_{2^*}(M){\rm Vol}_g(M)^{-\frac 2d}=S^2_{d,2}$. Does the  conclusion of Theorem \ref{thm: stability AB program} with $q=2^*$ holds true, possibly replacing the set $\mathcal M_{2^*}(A)$ with $\widetilde{\mathcal M}_{2^*}(A)\coloneqq \mathcal M_{2^*}(A) \cup \left\{ a(1+b\sfd_g(\cdot,p)^2)^{\frac{2-d}{2}} \colon a\in\R, b>0, p \in M \right\}$ ?
\end{question}
Notice that set $\widetilde{\mathcal M}_{2^*}(A)$ is obtained by adding to the true extremal set $\mathcal M_{2^*}(A)$ the family of Euclidean bubbles. The reason is that, since we have no information around the shape of extremal functions $\mathcal M_{2^*}(A)$, it is likely true that highly concentrated profiles of Euclidean bubbles can detect the possible concentrating behavior of certain extremizing sequences. For instance, similar phenomena are well-known to occur in the study of elliptic partial differential equations of critical order \cite{DruetHebeyFrederic03_Notice,DruetHebeyFrederic04,Struwe84}, see also the recent quantitative analysis \cite{ChenKim24}.

\medskip 
\noindent\textbf{Acknowledgments}. The author is a member of INDAM-GNAMPA and acknowledges the MIUR Excellence Department Project awarded to the Department of Mathematics, University of Pisa, CUP I57G22000700001.

I warmly thank I. Y. Violo for numerous inspiring discussions we had around the topics of this note. I also thank M. Pozzetta for useful remarks and comments on a preliminary version of this manuscript, D. Parise and E. Pasqualetto for their interest, M. Freguglia for pointing out relevant references and the anonymous referee for the careful reading and precise comments that substantially helped improving this manuscript.

Lastly, I would like to thank F. Cavalletti, M. Erbar, J. Maas and K. T. Sturm for the organization of the ``School and Conference on Metric Measure Spaces, Ricci Curvature, and Optimal Transport'' (MeRiOT 2024) and for the opportunity to write this work.

\def\cprime{$'$} \def\cprime{$'$}

\end{document}